\documentclass{SLC}
%%This is the final version, as published in August 2023.
%History of the versions:
%Initial submission (by mail): 31/01/2022
%Last author's revision: 19/10/2022 (later put on arxiv : https://arxiv.org/abs/2311.12634)
%Editors revision: 11 May 2023 
%Publication: August 2023 (with few minor corrections in November 2023)

\articlenumber{9}
\newcommand{\qbin}[2]{{\scriptscriptstyle{\begin{bmatrix}{#1}\\ {#2}\end{bmatrix}_q}}}
\newcommand{\qbinm}[2]{{\scriptscriptstyle{\begin{bmatrix}{#1}\\ {#2}\end{bmatrix}_{\frac{1}{q}}}}}
\newcommand\dq{d_q}
\newcommand\smallminus{\text{-}}
\usepackage{tikz}
\author[Malvina Vamvakari]{Malvina Vamvakari\textsuperscript{1}\protect\orcid{0000-0002-1825-0097}}
\address{\textsuperscript{1}Harokopio University of Athens, Department of Informatics and Telematics, Athens, Greece; \website{https://dblp.org/pid/46/6916.html}}

\title[On $q$-order statistics]{On $q$-order statistics}

\abstract{Building on the notion of $q$-integral introduced by Thomae in 1869, 
we introduce $q$-order statistics (that, is $q$-analogues of the classical order statistics, for $0<q<1$)
which arise from dependent and not identically distributed $q$-continuous random variables and to study their distributional properties. 
We study the $q$-distribution functions and the $q$-density functions of the relative $q$-ordered random variables. 
We focus on $q$-ordered variables arising from dependent and not identically $q$-uniformly distributed random variables and we derive their $q$-distributions, including $q$-power law, $q$-beta and $q$-Dirichlet distributions.}
\keywords{$q$-order statistics, $q$-multinomial formulae, univariate and multivariate $q$-continuous random variables, $q$-uniform distribution, $q$-power law distribution, $q$-beta distribution, $q$-Dirichlet distribution, waiting times of the Heine process}
 
\begin{document}
\maketitle

\section{Introduction}
\label{briefintro}

Order statistics and their properties have been studied thoroughly the last decades. The literature devoted to  order statistics 
 from independent and identically distributed  random variables is very extensive. 
The study of order statistics arising from independent or dependent and not identically distributed,
 random variables, is of great research interest. Excellent references devoted to order statistics  are, among others, 
the  work of Arnold, Balakrishnan and Nagaraja~\cite{Arnold}, Balakrishnan~\cite{bala}, 
David and Nagaraja~\cite{David}, or Papadatos~\cite{papadatos1}.

In the field of discrete $q$-distributions, Charalambides~\cite[p.167]{Charal3} has presented the order statistics arising from independent and identically distributed random variables, with common distribution a discrete $q$-uniform distribution. 
 Charalambides~\cite{Charal1, Charal2} also has studied the distributions of the record statistics in $q$-factorially increasing populations.

The main objective of this work is to introduce $q$-order statistics, for $0<q<1$, arising from dependent and not identically distributed $q$-continuous random variables and to study their distributional properties. We introduce $q$-order statistics as $q$-analogues of the classical order statistics. 
We study the $q$-distribution functions and $q$-density functions of the relative $q$-ordered random variables. 
We focus on $q$-ordered variables arising from dependent and not identically $q$-uniformly distributed random variables 
and we derive their $q$-distributions, including $q$-power law, $q$-beta and $q$-Dirichlet distributions. Moreover, we consider the Heine process, which had been introduced by Kyriakoussis and Vamvakari~\cite{kyriakmvamv2}; see also the work of Kemp~\cite{Kemp1992}.
Note that our notion of $q$-distribution is not related to  the  $q$-Gaussian distribution,
or to other Tsallis distributions~\cite{tsallis}.
\pagebreak

We prove that a conditional $q$-joint distribution of the waiting times of the Heine process coincides with the joint $q$-density function of $q$-ordered random variables arising from dependent $q$-continuous random variables.

This work contains three sections along with the introductory Section~\ref{briefintro}. In the preliminary Section~\ref{prelimdefnot}, we present all our $q$-definitions.
In the main Section~\ref{main}, we state and prove our results concerning the $q$-order statistics and their distributional properties. 

\section{Preliminaries, definitions and notation}
\label{prelimdefnot}
In this section, we define the $q$-series, the univariate and multivariate $q$-continuous random variables,
the Heine process, and the $q$-uniform distribution. It will allow us to study $q$-order statistics in the next section.

\subsection{\texorpdfstring{$q$}{q}-Series preliminaries}

The $q$-shifted factorials are
\begin{equation}
 (a;q)_0 :=1, \quad (a;q)_n:=\prod_{k=1}^n(1-aq^{k-1}), \quad n=1,2,\ldots, \,\mbox{or $\infty$.}
\end{equation}
The multiple $q$-shifted factorials are defined by
\begin{equation*} (a_1,\ldots,a_k;q)_n:=\prod_{j=1}^k(a_j;q)_n\end{equation*}
 The $q$-binomial coefficient is defined by
\begin{equation*}\qbin{\nu}{k}=\frac{(q;q)_n}{(q;q)_k(q;q)_{n-k}}=\frac{[n]_{q}!}{[k]_{q}![n-k]_{q}},\,\,k=0,1,\ldots,n,
\end{equation*}
 where
\begin{equation*}
[n]_{q}! = [1]_{q}[2]_{q}\cdots[n]_{q}= \frac{(q;q)_n}{(1-q)^n} =\frac{\prod_{k=1}^n(1-q^k)}{(1-q)^n}\end{equation*}
is the $q$-factorial number of order $n$ with
 $[t]_{q}=\frac{1-q^{t}}{1-q}$.
\newline
The $k$th-order factorial of the number $[n]_q$ is called {\em $q$-factorial} of $n$ of order $k$ and is given by
\begin{equation*}
[n]_{k} =[n]_q[n-1]_q\cdots[n-k+1]_q, \;\;\; k=1,2,\ldots,n\; .
\end{equation*}
Note that
\begin{equation*}[n]_{q^{-1}}=q^{-n+1}[n]_q,
\,\,[n]_{q^{-1}}!=q^{-\binom{n}{2}}[n]_q!\,\,
\,\,\mbox{and}\quad \qbinm{n}{k}=q^{-k(n-k)}\qbin{n}{k}.
\end{equation*}
The $q$-binomial coefficient $\qbin{n}{k}$, equals the $k$-combinations $\{m_1,\ldots,m_k\}$ of the set $\{1,\ldots,n\}$, weighted by
 $q^{m_1+\cdots+m_k-\binom{k+1}{2}}$,
\begin{equation}
\label{combinations}
\sum_{1 \leq m_1<\cdots<m_k \leq n}q^{m_1+\cdots+m_k-\binom{k+1}{2}}= \qbin{n}{k}.
\end{equation}
Let $n$ be a positive integer and let $x,y$ and $q$ be real numbers, with $q \neq 1$. Then, a version of $q$-Vandermonde's 
 formula is 
\begin{equation}
\label{vandermonde}
 \qbin{x+y}{n}=\sum_{k=0}^n \qbin{n}{k} q^{k(y-n+k)}[x]_k  [y]_{ n-k}. \nonumber
\end{equation}
An interesting $q$-identity deduced by the above version of $q$-Vandermonde's formula is
\begin{equation}
\label{q-identity1}
\sum_{k=0}^n (-1)^k \qbin{n}{k} q^{\binom{k+1}{2}-n(y+k)}\frac{[y]_q}{[y+k]_q}=\frac{1}{ \qbin{y+n}{n}}.
\end{equation}
Note that from the above equation we have the corresponding $q^{-1}$-identity
\begin{equation}
\label{q-identity2}
\sum_{k=0}^n (-1)^k \qbin{n}{k} q^{\binom{k+1}{2}+ny}\frac{[y]_q}{[y+k]_q}=\frac{1}{\qbinm{y+n}{n}}.
\end{equation}
The $q$-binomial formula is 
\begin{equation*}\prod_{i=1}^n(1+tq^{i-1})=\sum_{k=0}^n q^{\binom{k}{2}} \qbin{n}{k} t^k.\end{equation*}
The above $q$-binomial formula, by replacing $q$ by $q^{-1}$ and $t$ by $-t$, becomes
\begin{equation}
\label{qbinom1}
\prod_{i=1}^n(1-tq^{-(i-1)})=\sum_{k=0}^n (-1)^k q^{-\binom{k}{2}} \qbinm{n}{k} t^k=\sum_{k=0}^n (-1)^k q^{-\binom{k}{2}-k(n-k)} 
\qbin{n}{k} t^k.
\end{equation}
The $q$-multinomial coefficient is defined for nonnegative integers $n$ and $k_i$'s by
\begin{equation}
\label{qmultinomial}
 \qbin{n}{k_1,\ldots,k_r}=\frac{[n]_{q}!}{[k_1]_{q} \cdots [k_{r}]_{q}![n-k_1-\cdots-k_{r}]_{q}!}.
\end{equation}
We then have the two following equivalent expressions for the $q^{-1}$-multinomial coefficient
\begin{align}
\label{qmultinomial2}
 \qbinm{n}{k_1,\ldots,k_r}&=q^{-\binom{n}{2}+\sum_{j=1}^{r+1}\binom{k_j}{2} }\qbin{n}{k_1,\ldots,k_r}\\&=q^{-\sum_{j=1}^r k_j\left(n-k_1-\cdots-k_j\right) }\qbin{n}{k_1,\ldots,k_r}.
\end{align}
An ordered set partition of $A$ is a sequence $(A_1,\dots, A_m)$ of non-empty disjoint subsets of~$A$, such that $A_1 \cup \dots \cup  A_m = A$.
Using the notation from Flajolet and Sedgewick's book \emph{Analytic Combinatorics}~\cite{FlajoletSedgewick2009}, 
ordered set partitions are accordingly defined by the symbolic formula $\operatorname{Seq}(\operatorname{Set}_{\geq 1})$,
and thus have the (exponential) generating function $1/(2-\exp(t))$ of Fubini numbers $\{F_n\}_{n\geq 0}=\{1,1,3,13,75,541,\dots\}$.
E.g., there are 13 ordered set partitions of \{1,2,3\}.

Charalambides~\cite{Charal4} showed that the $q$-multinomial coefficient $ \qbin{n}{k_1,\ldots,k_r}$ equals the number of 
ordered partitions of the set $\{1,\ldots,n \}$ into $r+1$ subsets, $\left( A_1,\ldots, A_{r+1} \right)$ of size  $(k_1,\dots, k_{r+1}$),
if one associates a specific $q$-weight to each subset.
Writing $A_j=\{m_{j,1},\ldots,m_{j,k_j}\}$, this weight is $q^{m_{j,1}+\cdots+m_{j,k_j}-\binom{k_j+1}{2}}$, and one has
\begin{align}
\label{qmultinomialseries}
\qbin{n}{k_1,\ldots,k_r}= \sum_{A_1,\dots,A_r} \prod_{j=1}^rq^{m_{j,1}+\cdots+m_{j,k_j}-\binom{k_j+1}{2}},
\end{align}
where the summation is over all the above mentioned ordered partitions of $\{1,\dots, n \}$.

Vamvakari~\cite{Malvina} earlier proved the following alternative summation expansion of the $q$-multinomial coefficient
\begin{align}
\label{qmultinomialseries2}
\sum\prod_{j=1}^rq^{k_{j,1}+2k_{j,2}+\cdots+n k_{j,n}-\binom{k_j+1}{2}}= \qbin{n}{k_1,\ldots,k_{r}},
\end{align}
where the summation is 
over all $k_{j,i}=0,1$ such that $\sum_{i=1}^n k_{j,i}=k_j$ (for $j=1,\ldots,r$).

We shall also use the following $q$-difference operator (which we also call ``$q$-derivative'')
\begin{equation}
\label{qdiff} \dq  f(x) :=
\frac{f(x)-f(qx)}{(1-q)x}. 
\end{equation}
We refer to~\cite[Chapter 10.2]{Andrews} or~\cite{Charal3} for a more thorough   discussion of its properties.
It is clear that it is a discrete analogue of the derivative; it satisfies e.g.
\begin{equation}
\dq  x^n=\frac{1-q^n}{1-q}x^{n-1}=[n]_q x^{n-1} \nonumber
\end{equation}
and $ \dq  (f(x)\cdot g(x))=g(x) \dq  f(x)+f(qx) \dq  g(x)$. What is more, for differentiable functions, one has
\begin{equation}
\lim_{q \rightarrow 1} \dq  f(x)=f'(x).\nonumber
\end{equation}   
Now, following~\cite[Chapter 10.1]{Andrews},  we define the $q$-integral  by
\begin{align}
\int_0^af(x)\dq x&:=\sum_{n=0}^\infty [aq^n-aq^{n+1}]f(aq^n),\label{qint:Fermat}\\
\int_a^bf(x)\dq x&:=\int_0^bf(x)\dq x-\int_0^af(x)\dq  x. \nonumber
\end{align}
In this context, $d_q$  is sometimes called the Fermat measure, and should not be confused with the above $q$-derivative, even if they are, in some sense, related.
The $q$-integral over $[0,\infty)$ uses the division points
$\{q^n:-\infty<n<\infty \}$ and is
\begin{equation}
\label{qintegr} \int_0^\infty
f(x)\dq x:=(1-q)\sum_{n=-\infty}^\infty q^n f(q^n).
\end{equation}

\subsection {The Heine process}

Kyriakoussis and Vamvakari~\cite{kyriakmvamv2} introduced the Heine process as a $q$-analogue of the Poisson process. The Heine process is defined as follows.

\begin{definition}[Heine Process]
\label{HeineProc}
A continuous time process $\{X(t),t>0\}$, where $X(t)$ expresses the number of arrivals in a time interval $(0,t]$, is called \emph{Heine process} with
parameters $0<q<1$ and $\lambda>0$, if the following three assumptions hold
\newline
(a) The process starts at time $0$ with $X(0)=0$.
\newline
(b) In each time interval of length $\delta=(1-q)t$,
one has 1 arrival with probability $p(t)$, and 0 arrival with probability $1-p(t)$, where
\begin{equation*}p(t):=\frac{\lambda (1-q)t}{1+\lambda(1-q)t}.\end{equation*}
That is,
\begin{equation*}
P\left( X(t)-X(q t)=1\right)=p(t)   \text{\qquad and   \qquad }  P\left( X(t)-X(q t)=0\right)=1-p(t).
\end{equation*}
\end{definition}
This implies, for any $k\geq 1$:
\begin{align}
P\left( X(q^{k-1}t)-X(q^k t)=1\right)=\frac{\lambda (1-q)q^{k-1}t}{1+\lambda(1-q)q^{k-1}t},\\ 
P\left( X(q^{k-1}t)-X(q^kt)=0\right)=\frac{1}{1+\lambda (1-q)q^{k-1}t}.
\end{align}
Also,   the Heine process has the Heine distribution:
\begin{equation}
\label{poiss_proc} P(X(t)=k)=e_q(-\lambda t)\frac{q^{\binom{k}{2}}(\lambda t)^k }{[k]_q!},
\end{equation}
for $k\in \mathbb N$, with $e_q(z)=\prod_{i=1}^\infty \left(1-(1-q)zq^{i-1}\right)^{-1}$, $|z|<1/(1-q)$.

\subsection{Univariate and multivariate \texorpdfstring{$q$}{q}-continuous random variables}
Kyriakoussis and Vamvakari~\cite{kyriakmvamv2} presented the following definition of  $q$-continuous random variables. 
For clarity, let us begin by presenting this concept for one random variable.
 \begin{definition}[$q$-continuous]
 \label{qcont}
 A random variable $X$ is called \emph{$q$-continuous}  (or ``Fermat integrable'', as we integrate over the Fermat measure defined in~\eqref{qint:Fermat})
if there exists a non-negative function $f_q(x)$ (for $x \geq  0$)
such that  \begin{equation*}P\left(\alpha<X \leq \beta\right)=\int_\alpha^\beta f_q(x) d_qx.\end{equation*}
 The function $f_q(x)$  is called \emph{$q$-density function} of the random variable $X$. 
 \end{definition}
Note that, in particular, one has
\begin{equation*}\int_0^\infty f_q(x)d_qx=1.\end{equation*}

 For the corresponding distribution function
 \begin{equation*} F(x)=P\left(X\leq x\right),\end{equation*}
 we have by definition
 \begin{equation*}P\left(\alpha<X\leq\beta\right)=F(\beta)-F(\alpha),\end{equation*}
and, for $x\geq0$,
 \begin{equation*}F(x)=\int_0^xf_q(t)d_qt.\end{equation*}
 Taking the $q$-derivative of the above relation we have
 \begin{equation*}\dq F(x)=f_q(x)\end{equation*}
 and by the definition of the $q$-derivative we obtain
 \begin{equation*}f_q(x)=\frac{F(x)-F(qx)}{(1-q)x}=\frac{P\left(qx<X \leq x\right)}{(1-q)x}.\end{equation*} 
 
Let us now present the case of tuples.
\begin{definition}[multivariate $q$-continuous]
\label{qcontmult}
A $k$-variate random variable ${\mathcal{X}}=(X_1,\ldots,X_k )$ is called
 \emph{$q$-continuous}  (or ``Fermat integrable'', as we integrate over the Fermat measure defined in~\eqref{qint:Fermat}) if there exists a non-negative function
$f_q(x_1,\ldots,x_k)$
 such that
 \begin{align}
 \label{multdef}
& P\left(\alpha_1<X_1 \leq \beta_1, \ldots, \alpha_k<X_k \leq \beta_k\right)=
\int_{\alpha_k}^{\beta_k} \cdots
\int_{\alpha_1}^{\beta_1}
f_q(x_1,\ldots,x_k)d_qx_1 \cdots d_qx_k.
 \end{align}
 The function $f_q(x_1,\ldots,x_k)$  is called \emph{$q$-density function}
 of the $k$-variate random variable ${\mathcal{X}}=(X_1,\ldots,X_k )$ or joint $q$-density
 function of the random variables
 $X_1,\ldots,X_k $.
\end{definition}
 In particular, we have
  \begin{equation*}\int_0^\infty \cdots \int_0^\infty f_q(x_1,\ldots,x_k)d_qx_1 \cdots d_qx_k=1.\end{equation*}
  
 For the corresponding joint distribution function
 \begin{equation*}F(x_1,\ldots,x_k)=P\left(X_1\leq x_1,\ldots,X_k \leq x_k\right)\end{equation*}
we have
 \begin{equation}\label{inteq}F(x_1,\ldots,x_k)=\int_{0}^{x_k} \cdots \int_{0}^{x_1} f_q(t_1,\ldots,t_k)d_qt_1 \cdots d_qt_k.\end{equation}
Building on the notation~\eqref{qdiff},
let us define the partial $q$-derivatives  by 
\begin{equation*}
\frac{\partial F(x_1,\ldots,x_k)}{\partial _qx_k \cdots \partial_qx_1}=(d _qx_k)  \cdots (d_qx_1)  F(x_1,\ldots,x_k).\end{equation*}

Then, taking the partial $q$-derivatives of the relation~\eqref{inteq},  we have 
 \begin{equation*}\frac{\partial F(x_1,\ldots,x_k)}{\partial _qx_k \cdots \partial_qx_1}=
 f_q(x_1,\ldots,x_k),\,\,x_i> 0,\,\,i=1,\ldots,k \end{equation*}
 and by the definition of the partial $q$-derivative we obtain
 \begin{equation}
 \label{multjointqdens}
 f_q(x_1,\ldots,x_k)=\frac{P\left(q x_1<X_1 \leq x_1, \ldots, q x_k<X_k \leq x_k\right)}{(1-q)x_1\cdots
(1-q)x_k}.
\end{equation}
 The marginal $q$-density functions of the random variables $X ,\,\,i=1,\ldots,k$, are given by
\begin{equation*} f_{X_{i}}(x_i)=\int_0^{\infty}\cdots \int_0^{\infty} \ f_{\mathcal{X}}(x_1,\ldots,x_k) d_qx_1\cdots d_qx_{i-1}d_qx_{i+1}\cdots d_qx_k,\,\,i=1,\ldots,k. \end{equation*}

For the needs of this work, we also define the conditional $q$-density function. 
Let $(X,Y)$ be a bivariate $q$-continuous random variable, with $q$-density function $f_q(x,y) \geq 0$, $x,y>0$ and $f_q(y)> 0$, $y>0$ the marginal $q$-density function of $Y$. 
Then the function 
\begin{equation*}f_{X|Y}(x|y)=\frac{f_{X,Y}(x,y)}{f_{Y}(y)}, x>0\end{equation*}
 is a $q$-density function because
\begin{equation*}f_{X|Y}(x|y)\geq 0, x>0\end{equation*}
and
\begin{equation*}\int_0^\infty f_{X|Y}(x|y)d_qx=\frac{1}{f_{Y}(y)} \int_0^\infty f_{X,Y}(x,y)d_qx=\frac{f_{Y}(y)}{f_{Y}(y)}=1. \end{equation*}
\pagebreak

Since
\begin{equation*}P\left(qx<X \leq x|qy<Y \leq y\right)=\frac{P\left(qx<X \leq x,qy<Y \leq y\right)}{P\left(qy<Y \leq	 y\right)}\end{equation*}
we confirm that 
\begin{equation}
\label{dependentqcont}
f_{X|Y}(x|y)=\frac{P\left(qx<X \leq x|qy<Y \leq y\right)}{(1-q)x}=\frac{\frac{P\left(qx<X \leq x,qy<Y<y\right)}{(1-q)x(1-q)y}}{\frac{P\left(qy<Y \leq y\right)}{(1-q)y}}=\frac{f_{X,Y}(x,y)}{f_{Y}(y)}
\end{equation}
and we give the following definition of conditional $q$-density function.
\begin{definition}[conditional $q$-density]
\label{dependentqjointdef}
Let $(X,Y)$ be a bivariate $q$-continuous random variable.
Let $f_{X,Y}(x,y)$ be its $q$-density function  and $f_Y(y)$ the marginal $q$-density function of $Y$. 
If $f_Y(y)>0$ for $y>0$,  the function
\begin{equation*}f_{X|Y}(x|y):=\frac{f_{X,Y}(x,y)}{f_{Y}(y)}\end{equation*}
 is called \emph{conditional $q$-density function} of the random variable $X$ given that $qy<Y<y$.
\end{definition}
Let $\left(X_1,\ldots,X_{k}\right)$ be a $q$-continuous $k$-variate random variable, with joint $q$-density function $f(x_1, \ldots,x_k)\geq 0$, $x_i>0$, $i=1,\ldots,k$. The {\it conditional $q$-density function} of a $q$-continuous $r$-variate random variable $\left(X_1,\ldots,X_{r}\right)$ given a $q$-continuous $(k-r)$-variate random variable $\left(X_{r+1},X_{r+2},\ldots,X_k \right)$ is expressed as
\begin{equation}
\label{multdependqdens}
h_{(X_1,\ldots,X_{r})|(X_{r+1},X_{r+2},\ldots,X_k )}(x_1,\ldots,x_r|x_{r+1},\ldots,x_k)=\frac{f(x_1,\ldots,x_r)}{g(x_{r+1},x_{r+2},\ldots,x_k)},
\end{equation}
where $g(x_{r+1},x_{r+2},\ldots,x_k)>0$ is the marginal $q$-density function of the $(k-r)$-variate random variable $\left(X_{r+1},X_{r+2},\ldots,X_k \right)$.

\subsection{On the \texorpdfstring{$q$}{q}-continuous \texorpdfstring{$q$}{q}-uniform distribution}
For the needs of this work we give the definition of the $q$-uniform distribution and derive easily its main characteristics and properties. The $q$-uniform distribution is defined as follows.
\begin{definition}[$q$-uniform] Let $X$ be a $q$-continuous random variable with $q$-density function
\begin{equation}
\label{qunifA}
f_q(x)= 
\begin{cases}
\frac{1}{\beta},& 0 \leq x \leq \beta, \\ 
0,& x<0 \mbox{ or } x>\beta,
\end{cases}
\end{equation}
where $\beta>0$. The distribution of the random variable $X$ is called $q$-uniform distribution with parameter $\beta$.
\end{definition}
Note that by the function~\eqref{qunifA} and the definition of the $q$-integral,
 \begin{equation*}
\int_0^\beta f_q(x)d_qx
=\sum_{n=0}^\infty \beta \left(q^n-q^{n+1}\right)
f_q(\beta q^n)
=1, \end{equation*}
as required by the definition of a $q$-density function.
\pagebreak

\begin{proposition}
\label{moments}
 The $r$-th $q$-moments of the $q$-uniform distribution is given by
\begin{equation}
\label{qmomentsA}
\mu_{r}=E\left(X^r\right)=\frac{\beta^{r}}{[r+1]_q}.
\end{equation}
In particular its $q$-mean and $q$-variance are given respectively by
\begin{equation}
\label{meanvar}
\mu_q=E\left(X\right)=\frac{\beta}{[2]_q }\,\,\, \mbox{and}\,\,\, \sigma_q^2=\frac{\beta^2 q}{(1+q+q^2)(1+2q+q^2)}.
\end{equation}
\end{proposition}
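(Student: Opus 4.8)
The plan is to evaluate the $r$-th $q$-moment directly from the definitions. By the definition of the $q$-density function together with~\eqref{qunifA}, we have $E\left(X^r\right)=\int_0^\beta x^r f_q(x)\,d_qx=\frac1\beta\int_0^\beta x^r\,d_qx$. Applying the Fermat-measure definition~\eqref{qint:Fermat} of the $q$-integral gives
\begin{equation*}
\int_0^\beta x^r\,d_qx=\sum_{n=0}^\infty\bigl(\beta q^n-\beta q^{n+1}\bigr)(\beta q^n)^r=\beta^{r+1}(1-q)\sum_{n=0}^\infty q^{n(r+1)},
\end{equation*}
and since $0<q<1$ the geometric series converges to $1/(1-q^{r+1})$. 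Hence $\int_0^\beta x^r\,d_qx=\beta^{r+1}(1-q)/(1-q^{r+1})=\beta^{r+1}/[r+1]_q$, and dividing by $\beta$ yields $\mu_r=\beta^r/[r+1]_q$, which is~\eqref{qmomentsA}. One may observe that this is a one-line variant of the normalization check displayed after~\eqref{qunifA}: inserting the extra factor $(\beta q^n)^r$ into that same sum is all that is needed.

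For the $q$-mean, take $r=1$ to get $\mu_q=E\left(X\right)=\beta/[2]_q$, the first assertion of~\eqref{meanvar}. For the $q$-variance I would start from $\sigma_q^2=E\bigl((X-\mu_q)^2\bigr)$ and use the linearity of the $q$-integral to expand it as $E\left(X^2\right)-\mu_q^2=\beta^2/[3]_q-\beta^2/[2]_q^2$. Substituting $[2]_q=1+q$ and $[3]_q=1+q+q^2$, placing the two fractions over the common denominator $(1+q+q^2)(1+q)^2$, and simplifying the numerator $(1+q)^2-(1+q+q^2)=q$ gives $\sigma_q^2=\beta^2 q/\bigl((1+q+q^2)(1+2q+q^2)\bigr)$, as claimed.

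There is essentially no serious obstacle here. The only point that genuinely uses a hypothesis is the convergence of the geometric series $\sum_n q^{n(r+1)}$, which is exactly where $0<q<1$ enters; the remaining algebraic simplification of the variance is elementary. If one wanted, the formula for $\mu_r$ could also be deduced from the $q$-integral over $[0,\beta]$ by the substitution principle reducing it to $\int_0^1 x^r\,d_qx=1/[r+1]_q$, but the direct summation above is the shortest route.
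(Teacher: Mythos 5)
Your proposal is correct and follows the same route as the paper, which simply states that the moment formula is ``easily obtained'' from the $q$-density~\eqref{qunifA} and the definition of the $q$-integral; you have merely written out the geometric-series evaluation and the variance algebra that the paper leaves implicit. All computations check out, including the identification $(1+q)^2=1+2q+q^2$ needed to match the stated form of $\sigma_q^2$.
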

\begin{proof}
Using the $q$-density function~\eqref{qunifA} and the definition of the $q$-integral,
the $r$th $q$-moment of the $q$-uniform distribution,
\begin{equation}
\mu_{r}=E\left(X^r\right)=\int_0^\beta x^r f_q(x)d_qx,\nonumber
\end{equation}
is easily obtained in the form~\eqref{qmomentsA}. The $q$-mean and $q$-variance of $X$ follows.
\end{proof}
\begin{remark}
\label{compsition}
Let $X$ be a $q$-continuous random variable obeying a $q$-uniform distribution with parameter $\beta$, then the linearly transformed $q$-continuous random variable $Y=\frac{X}{\beta}$
obeys the $q$-uniform distribution with parameter $\beta=1$. Indeed
\begin{equation}
\label{transformation}
F_{Y}(y)=P\left(Y\leq y \right)=P\left(X \leq \beta y\right)=F_{X}\left(\beta y\right)
=\int_{\alpha}^{\beta y}f_q(x)d_qx=y,\,\,\,0 \leq y \leq1.
\end{equation}
So
\begin{equation*} f_{Y}(y)=
\begin{cases}
1, & 0 \leq y \leq 1,\\
0, & y<0 \mbox{ or } y>1.
\end{cases}
\end{equation*}
\end{remark}

In the following proposition, we show
that the linear transformation $Y=\frac{X}{\beta}$
can be generalized by considering the transformation $Y=F_{X}(X)$,
 where $F_{X}(x)$ is a distribution function of a $q$-continuous random variable $X$. 
\begin{proposition}
\label{generalcomposition}
Let $X$ be a $q$-continuous random variable with probability function $F_{X}(x)$, $x \in R$. Then the distribution of the $q$-continuous random variable $Y=F_{X}\left(X\right)$ is the $q$-uniform distribution with parameter $\beta=1$.
\end{proposition}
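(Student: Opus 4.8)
The plan is to prove the $q$-analogue of the classical probability integral transform by computing the distribution function $F_Y$ of $Y=F_X(X)$ directly and then reading off its $q$-density via the $q$-derivative, exactly as was done for the special linear case in Remark~\ref{compsition}.

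First I would reduce the statement to the single identity $F_Y(y)=y$ for $0\le y\le 1$; the values $F_Y(y)=0$ for $y<0$ and $F_Y(y)=1$ for $y\ge 1$ are immediate because $F_X$ is $[0,1]$-valued, so $Y=F_X(X)\in[0,1]$ almost surely. Granting $F_Y(y)=y$ on $[0,1]$, the relation $\dq F_Y(y)=f_Y(y)$ from the discussion following Definition~\ref{qcont} gives
\begin{equation*}
f_Y(y)=\dq y=\frac{y-qy}{(1-q)y}=1,\qquad 0<y\le 1,
\end{equation*}
while $f_Y(y)=\dq(\mathrm{const})=0$ for $y<0$ and $y>1$; this is precisely the $q$-uniform density~\eqref{qunifA} with $\beta=1$, proving the proposition.

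For the core identity I would mimic the classical argument, noting that the event manipulations involve only the genuine real random variable $X$ and the genuine real function $F_X$. Since $X$ is $q$-continuous, $F_X$ is nondecreasing and continuous along the support of $X$; continuity follows from $P(X=a)=\lim_{\varepsilon\to0^+}\int_{a-\varepsilon}^{a}f_q(t)\dq t=0$, because the Fermat-measure series~\eqref{qint:Fermat} defining $\int_0^{a-\varepsilon}f_q\,\dq t$ converges term-by-term, dominated by a geometric series, to the one defining $\int_0^{a}f_q\,\dq t$. Writing $F_X^{-1}(y)=\sup\{x:F_X(x)\le y\}$ for the generalized inverse, continuity yields $F_X(F_X^{-1}(y))=y$ together with the a.s.\ event identity $\{F_X(X)\le y\}=\{X\le F_X^{-1}(y)\}$, whence
\begin{equation*}
F_Y(y)=P\bigl(F_X(X)\le y\bigr)=P\bigl(X\le F_X^{-1}(y)\bigr)=F_X\bigl(F_X^{-1}(y)\bigr)=y,\qquad 0\le y\le 1,
\end{equation*}
which in the case $F_X(x)=x/\beta$ is exactly the computation $F_X(\beta y)=\int_0^{\beta y}f_q\,\dq x=y$ of Remark~\ref{compsition}.

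The step I expect to require the most care is the one that is automatic in the classical theory: controlling the behaviour of $F_X$ as defined by the $q$-integral~\eqref{inteq}. Globally $F_X$ need not be monotone — for the $q$-uniform one checks it is sawtooth-shaped, with downward jumps at $\beta q^{-k}$, $k\ge1$ — so the generalized-inverse argument has to be carried out on the essential support of $X$ (equivalently, on the set where $f_q$ is continuous), which is precisely the range over which the transform $y=F_X(x)$ is meant to be inverted; the point to make rigorous is that restricting there is harmless because $X$ lands in that set with probability one. Everything else is the routine $q$-derivative computation indicated above.
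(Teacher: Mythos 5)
Your proof is correct and follows essentially the same route as the paper: one computes $F_Y(y)=P\left(F_X(X)\le y\right)=P\left(X\le F_X^{-1}(y)\right)=F_X\left(F_X^{-1}(y)\right)=y$ for $0\le y\le 1$ and then reads off the $q$-uniform density with $\beta=1$ by $q$-differentiation. The extra care you devote to the possible non-monotonicity of a $q$-distribution function outside the support (and to restricting the generalized inverse to the essential support of $X$) addresses a genuine subtlety that the paper's one-line argument passes over silently, but it does not alter the underlying approach.
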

\begin{proof}
The distribution function of the $q$-continuous random variable $Y$ is given, for $0 \leq y \leq1$,  by
\begin{equation}
F_{Y}(y)=P\left(Y \leq y \right)=P\left(F_{X}(X) \leq y\right)=P\left(X\leq F_{X}^{-1}(y)\right)
=F_{X}\left(F_{X}^{-1}(y)\right)=y.
\end{equation}
So
\begin{equation*} f_{Y}(y)=
\begin{cases} 
1, & 0 \leq y \leq 1,\\
0, & y<0 \mbox{ or } y>1 
\end{cases} 
\end{equation*} 
and the proposition follows.
\end{proof}

\pagebreak

\section{Main results}
\label{main}

\subsection{\texorpdfstring{\!On the distributions of $q$-ordered random variables}{On the distributions of q-ordered random variables}}
Let a $\nu$-variate $q$-continuous random variable ${\mathcal{X}}=(X_1,\ldots,X_{\nu})$ be defined in a sample space~$\Omega$. Then for the values $x_1=X_1(\omega),\ldots,
x_{\nu}=X_{\nu}(\omega)$, $\omega \in \Omega$ there is a permutation $(i_1,\ldots, i_\nu)$ of the $\nu$ indices $\{ 1,\ldots,\nu \}$,
such that $x_{i_1}\leq \cdots <x_{i_{\nu-1}}\leq x_{i_\nu}$. 
The $k$-th ordered random variable is denoted by $X_{(k)}$ and defined by
\begin{equation*}X_{(k)}(\omega)=x_{(k)},\,\,\omega \in \Omega,\end{equation*}
where $x_{(k)}=x_{i_k}, \,k=1,\ldots,\nu$. In particular, for $k=1$ this gives $X_{(1)}=\min\{X_1,\cdots,X_{\nu}\}$ and, for $k=\nu$ this gives $X_{(\nu)}=\max\{X_1,\ldots,X_{\nu}\}$. Generally, the following inequalities hold:
 \begin{equation*}0 \leq X_{(1)}\leq \cdots \leq X_{(\nu)} \leq \beta,\end{equation*} 
 for a positive real number $\beta$.
 \smallskip
 
We now introduce the following definition of  $q$-ordered random variables.
\begin{definition}[$q$-ordered]
\label{qorderstat}
Let 
${\mathcal{Y}}=(Y_{1},\ldots,Y_{\nu})$ be a $\nu$-variate $q$-continuous random variable and $Y_{(k)},\, 1 \leq k \leq \nu$, be the corresponding $k$-th ordered random variables. 
Assume that $Y_{(k)},\, 1 \leq k \leq \nu$, satisfy the inequalities
\begin{equation}\label{qordered} 0 \leq Y_{(1)} <q Y_{(2)}  <  Y_{(2)} <    \cdots <Y_{(\nu-1)}< qY_{(\nu)}<Y_{(\nu)} \leq \beta, \end{equation}
for a positive real number $\beta$. Then, $Y_{(k)}$ (for any $k$ such that $1 \leq k \leq \nu$) is called the $k$-th \emph{$q$-ordered} random variables.
\end{definition}
\smallskip

Let  $Y_{(k)},\, 1 \leq k \leq \nu$, be the
 $k$-th $q$-ordered  random variables, where the non-ordered 
$q$-continuous random variables $Y_{1},\ldots,Y_{\nu}$, are {\it dependent and not identically distributed}.
The non-ordered, dependent and not identically distributed, random variables $Y_i, i=1, \dots,\nu$, are randomly selected from the same sample space
 and the corresponding $k$-th $q$-ordered random variables, $Y_{(k)}$, $1 \leq k \leq \nu$, satisfy inequalities~\eqref{qordered}.
 Each non-ordered random variable $Y_i$ is thus defined on the set 
\begin{align}
&R_{Y_i}:=[0,q^{(i-1)} \beta] = \cup_{j=i}^\nu R_j,  \\
& \text{where $R_j:=(q^j \beta, q^{j-1} \beta]$ for  $j=1, \ldots,\nu-1$ and  $R_\nu:=[0,q^{\nu-1} \beta]$}.    \label{sets}  
\end{align}
In particular, one has 
\begin{equation}
\cup_{j=1}^\nu R_j=[0, \beta] \text{ and  }  R_i \cap R_j=\emptyset \text{ for } i \neq j.
\end{equation}

 Moreover, we assume that the non-ordered random variables $Y_i$'s are not identically distributed according to their definitions sets 
 but they are distributed with the same functional form. Furthermore, the stochastic dependencies satisfied by the non-ordered random variables $Y_i$'s are explicitly defined hereafter.
\pagebreak

For any integer $r$ between 1 and $\nu$, let $\{i_1,\ldots,i_r\}$ be an $r$-combination of $\{1,\ldots,\nu \}$ 
satisfying $i_1<\cdots<i_r$, 
and let $\{i_{r+1},i_{r+2},\ldots,i_\nu\}$ be its complementary combination
(i.e., one has  $\{i_1,\ldots,i_r\} \cup \{i_{r+1},i_{r+2},\ldots,i_\nu\} = \{1,\ldots,\nu \}$) satisfying \mbox{$i_{r+1}<i_{r+2}<\cdots<i_\nu$}.
Then, we assume that the non-ordered random variables $Y_i$'s satisfy the following dependence relations
for $y \in [0,\beta]$:
 \begin{equation}
 \label{qdep1}
 P\left( Y_{i_r} \leq y|Y_{i_1} \leq y,\ldots, Y_{i_{r-1}}\leq y\right)=P\left( Y_{i_r} \leq q^{r-1} y \right), 
 \end{equation}
 \begin{equation}
 \label{qdep2}
 P\left( Y_{i_r} \leq y|Y_{i_1} > y, \ldots, Y_{i_{r-1}}> y\right)=P\left( Y_{i_r} \leq y \right), 
 \end{equation}
 and
 \begin{align}
 \label{qdep3}
&P\left( Y_{i_m} \leq y|Y_{i_1} \leq y,\ldots, Y_{i_{r}}\leq y, Y_{i_{r+1}}>y,\ldots, Y_{i_{m-1}} >y \right)\nonumber\\
 &\,\,\,\,\,\,\,\,\,\,\,\,=P\left( Y_{i_m} \leq y|Y_{i_1} \leq y,Y_{i_{r}}\leq y\right)\nonumber\\
 &\,\,\,\,\,\,\,\,\,\,\,\,= P\left( Y_{i_m, q} \leq q^{i_m-(m-r)} y \right), m=r+1,r+2,\ldots,\nu.
 \end{align}
The $q$-distribution functions of the maximum, minimum, and $k$-th $q$-ordered random variables
(respectively $Y_{(1)}$, $Y_{(\nu)}$, and $Y_{(k)}$) are derived in the following lemma.
\begin{lemma}
\label{qdistrib} Let $Y_{1},\ldots,Y_{\nu}$ be dependent $q$-continuous random variables, where 
\begin{itemize}
\item[(a)]  Each $Y_i$ is defined on the set $R_{Y_i}$ from Formula~\eqref{sets}.
\item[(b)] Each $Y_i$ has a $q$-distribution function $F_{Y_i}(y)$ $=P\left(Y_i\leq y\right)$, for $y \in R_{Y_i}$, 
of the same functional form and satisfies the dependence relations~\eqref{qdep1},~\eqref{qdep2},~\eqref{qdep3}. 
\end{itemize}
Then,  the $q$-distribution function of the maximum $q$-ordered random variable $Y_{(\nu)}=\max$ $\{{Y_{1}, \ldots},Y_{\nu}\}$,
 where $Y_{(i)}$, $i=1,\ldots,\nu, $ satisfy inequalities~\eqref{qordered}, 
 is given for $y \in [0, \beta]$ by
\begin{equation}
\label{max}
F_{Y_{(\nu)}}(y)=\prod_{i=1}^\nu F_{Y_i}(q^{i-1}y).
\end{equation}
Moreover, the $q$-distribution function of the minimum $q$-ordered random variable $Y_{(1)}=\min \{Y_{1},\ldots,Y_{\nu}\}$,
where $Y_{(i)}$, $i=1,\ldots,\nu, $ satisfy inequalities~\eqref{qordered},
 is given by
\begin{equation}
\label{min}
F_{Y_{(1)}}(y)=1-\prod_{i=1}^\nu \left(1-F_{Y_i}(y) \right).
\end{equation}
Furthermore, the $q$-distribution function of $k$-th $q$-ordered random variable $Y_{(k)}$, $1 \leq k \leq \nu$,
where $Y_{(i)}$, $i=1,\ldots,\nu, $ satisfy inequalities~\eqref{qordered},
 is given for $y \in [0, \beta]$ by
\begin{equation}
\label{general}
F_{Y_{(k)}}(y)=\sum_{r=k}^\nu \sum_{_{1\leq i_1<\ldots<i_r\leq \nu}} \prod_{j=1}^r F_{Y_{i_j}}\left(q^{j-1}y\right)\prod_{m=r+1}^\nu \left(1- F_{Y_{i_m}}\left( q^{i_m-(m-r)}y \right) \right), 
\end{equation}
where the inner summation is over all $r$-combinations $\{i_1, \ldots, i_r\}$ of the set $\{1, \ldots,\nu\}$.
\end{lemma}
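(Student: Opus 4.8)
The plan is to derive all three formulas from the elementary chain rule for conditional probabilities together with the dependence hypotheses~\eqref{qdep1},~\eqref{qdep2} and~\eqref{qdep3}; formula~\eqref{general} is the general statement, and I would treat~\eqref{max} and~\eqref{min} first, as the simpler and instructive special cases. At each step one checks that the arguments $q^{j-1}y$ and $q^{i_m-(m-r)}y$ fed into the distribution functions lie in the corresponding domains $R_{Y_i}$ of~\eqref{sets} whenever $y\in[0,\beta]$, so that the quantities are well defined; this is exactly what the choice of the sets $R_{Y_i}$ ensures.

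For the maximum, I would start from the set identity $\{Y_{(\nu)}\le y\}=\bigcap_{i=1}^\nu\{Y_i\le y\}$, valid since $Y_{(\nu)}=\max\{Y_1,\dots,Y_\nu\}$, and expand it as the telescoping product
\[
F_{Y_{(\nu)}}(y)=\prod_{i=1}^\nu P\bigl(Y_i\le y\,\big|\,Y_1\le y,\dots,Y_{i-1}\le y\bigr).
\]
Choosing the combination $\{i_1,\dots,i_r\}=\{1,\dots,r\}$ (so $i_j=j$) in~\eqref{qdep1} turns the $i$-th factor into $P(Y_i\le q^{i-1}y)=F_{Y_i}(q^{i-1}y)$, which gives~\eqref{max}. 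Symmetrically, for the minimum I would use $\{Y_{(1)}>y\}=\bigcap_{i=1}^\nu\{Y_i>y\}$, expand $P(Y_1>y,\dots,Y_\nu>y)$ as a telescoping product of conditional probabilities, and apply~\eqref{qdep2} (again with $i_j=j$) to identify the $i$-th factor with $1-F_{Y_i}(y)$; passing to the complement yields~\eqref{min}.

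The general case~\eqref{general} rests on the disjoint decomposition of $\{Y_{(k)}\le y\}$ according to the exact index set of those $Y_i$ that fall below $y$. Since $Y_{(k)}\le y$ holds iff at least $k$ of the $Y_i$ satisfy $Y_i\le y$,
\[
\{Y_{(k)}\le y\}=\bigcup_{r=k}^\nu\ \bigcup_{1\le i_1<\cdots<i_r\le\nu}\bigl\{Y_{i_1}\le y,\dots,Y_{i_r}\le y,\ Y_{i_{r+1}}>y,\dots,Y_{i_\nu}>y\bigr\},
\]
a disjoint union, where $\{i_{r+1}<\cdots<i_\nu\}$ denotes the complementary combination. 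For a fixed term I would apply the chain rule ordering the conditioning so that the ``$\le y$'' variables come first in increasing index order and the ``$>y$'' variables afterwards in increasing index order. Then the first block of $r$ factors is handled by~\eqref{qdep1} and equals $\prod_{j=1}^r F_{Y_{i_j}}(q^{j-1}y)$, while the $m$-th factor of the second block ($m=r+1,\dots,\nu$) is $P\bigl(Y_{i_m}>y\,\big|\,Y_{i_1}\le y,\dots,Y_{i_r}\le y,Y_{i_{r+1}}>y,\dots,Y_{i_{m-1}}>y\bigr)=1-F_{Y_{i_m}}(q^{i_m-(m-r)}y)$ by~\eqref{qdep3}. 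Summing these products over all $r$ and all $r$-combinations produces~\eqref{general}; taking $k=\nu$ leaves only the $r=\nu$ term and recovers~\eqref{max}, and for $k=1$ the normalization $\sum_{r=0}^\nu P(\text{exactly }r\text{ of the }Y_i\text{ satisfy }Y_i\le y)=1$ reconciles~\eqref{general} with~\eqref{min} (indeed the $r=0$ term of the general formula is precisely $\prod_{i=1}^\nu(1-F_{Y_i}(y))$).

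The step I expect to require the most care is the bookkeeping in the chain rule: the conditioning events on the left-hand sides of~\eqref{qdep1},~\eqref{qdep2} and~\eqref{qdep3} are written for indices arranged in increasing order (and, in~\eqref{qdep3}, split into an initial ``$\le y$'' block followed by a ``$>y$'' block), so each telescoping product must be organized in precisely that order for the hypotheses to apply verbatim, and one must keep track of how the $q$-powers $q^{j-1}$ and $q^{i_m-(m-r)}$ accumulate; the inequalities~\eqref{qordered} are what make this ordering compatible with those $q$-scalings. Once the factorizations are set up correctly, the remainder is a direct substitution of the hypotheses.
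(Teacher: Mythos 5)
Your proposal is correct and follows essentially the same route as the paper: the telescoping chain rule for conditional probabilities combined with the dependence hypotheses~\eqref{qdep1},~\eqref{qdep2},~\eqref{qdep3} for the max and min, and the disjoint decomposition of $\{Y_{(k)}\le y\}$ by the exact index set of variables falling in $[0,y]$ for the general case. Your added consistency checks (recovering~\eqref{max} at $k=\nu$ and reconciling~\eqref{min} with the $r=0$ term via normalization) go slightly beyond what the paper records but do not change the argument.
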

\pagebreak
\begin{proof}
Let $ F_{Y_{(\nu)}}(y)$  be the $q$-distribution function of 
$Y_{(\nu)}=\max\{Y_{1},\ldots,Y_{\nu}\}$,
 then 
\begin{align}
\label{max1}
&F_{Y_{(\nu)}}(y)=P\left(Y_{(\nu)}\leq y\right)= P\left( \max \{Y_1,\ldots,Y_{\nu}\} \leq y \right)\nonumber\\&= P\left( Y_{1} \leq y,Y_{2} \leq y,\ldots,Y_{\nu} \leq y \right)
\nonumber\\&=P\left( Y_1 \leq y) P(Y_{2} \leq y \vert Y_{1} \leq y) \cdots P(Y_{\nu} \leq y \vert Y_{1} \leq y, 
\ldots, Y_{\nu-1} \leq y \right).
\end{align}
By assumptions (a) and (b), still for $y \in [0, \beta]$, the above equation~\eqref{max1} 
 becomes
 \begin{equation}
F_{Y_{(\nu)}}(y)=\prod_{i=1}^\nu F_{Y_i}(q^{i-1}y). \nonumber
\end{equation}
 Let also $ F_{Y_{(1)}}(y), y \in [0, \beta]$, be the $q$-distribution function of $Y_{(1)}=\min\{Y_{1},\ldots,Y_{\nu}\}$, then 
\begin{align}
\label{min1}
&F_{Y_{(1)}}(y)= P\left(Y_{(1)}\leq y\right)= 1- P\left(Y_{(1)}> y\right)=1-P\left( \min \{Y_{1},\ldots,Y_{\nu}\} > y \right) \nonumber\\ 
&=1- P\left( Y_{1} > y,Y_{2} > y,\ldots,Y_{\nu} > y \right)\nonumber\\
&=1- P\left( Y_1 > y) P(Y_2 > y \vert Y_1 >y) \cdots P(Y_{\nu} > y \vert Y_{1} >y, Y_{2} > y,\ldots, Y_{\nu-1} > y \right) \nonumber\\
&=1-\left(1- P\left( Y_1 < y)\right) (1-P(Y_{2}<y\vert Y_{1} >y) \cdots (1- P(Y_{\nu} < y \vert Y_{1}>y, \ldots, Y_{\nu-1}> y \right)). 
\end{align}
By assumptions (a) and (b), the above equation~\eqref{min1}
 becomes
 \begin{equation}
F_{Y_{(1)}}(y)= 1-\prod_{i=1}^\nu \left(1-F_{Y_i}(y) \right), y \in [0, \beta]. \nonumber
\end{equation}
Now, let $F_{Y_{(k)}}(y)$ be the $q$-distribution function of $Y_{(k)}$. Then, the event $Y_{(k)}\leq y$ occurs if and only if at least $k$ random variables from $\{Y_{1}, Y_\nu\}$ take values in the set $[0, y]$ while the remaining ones $\nu-k$ take values in the set $(y,\beta]$. More precisely, consider an $r$-combination $\{i_1,\ldots,i_r\}$ of $\{1,\ldots,\nu,\}$, with $i_1<\ldots<i_r$, and its complementary combination $\{i_{r+1},i_{r+2},\ldots,i_\nu\}$, with $i_{r+1}<i_{r+2}<\ldots<i_\nu$. Then the $q$-distribution function of $Y_{(k)}$ is expressed as
\begin{align}
\label{general1}
&F_{Y_{(k)}}(y)=P\left(Y_{(k)}\leq y\right)\nonumber\\
&=\sum_{r=k}^\nu\sum_{1\leq i_1<\ldots<i_{r}\leq \nu} P\left( Y_{i_1} \leq y,\ldots,Y_{i_{r}} \leq y, Y_{i_{r+1}} >y,\ldots, Y_{i_{\nu}} >y \right)\nonumber\\
&=\sum_{r=k}^\nu\sum_{1\leq i_1<\ldots<i_{r}\leq \nu} P\left( Y_{i_1} \leq y,\ldots,Y_{i_{r}} \leq y \right) P\left( Y_{i_{r+1}} >y,\ldots, Y_{i_{\nu}} >y| Y_{i_1} \leq y,\ldots,Y_{i_{r}} \leq y\right)\nonumber\\
&=\sum_{r=k}^\nu \sum_{_{1\leq i_1<\ldots<i_r\leq \nu}}P\left( Y_{i_1} \leq y) \cdots P(Y_{i_{r}} \leq y \vert Y_{i_1} \leq y,\ldots, Y_{i_{r-1}} \leq y \right)\nonumber\\
 &\phantom{=} \cdot 
 \resizebox{0.95\hsize}{!}{$
  P(Y_{i_{r+1}} > y \vert Y_{i_1} \leq y,\ldots, Y_{i_{r}} \leq y)\cdots P(Y_{i_{\nu}} > y \vert Y_{i_1} \leq y,\ldots, Y_{i_{r}} \leq y, Y_{i_{r+1}}>y,\ldots, Y_{i_{\nu-1}}>y ),
 $}
  \nonumber\\&
\end{align}
still with an inner summation  over all $r$-combinations $\{i_1, \ldots, i_r\}$ of the set $\{1, \ldots,\nu\}$.
\newline
By assumptions (a) and (b), Equation~\eqref{general1} becomes
\begin{equation}
F_{Y_{(k)}}(y)=\sum_{r=k}^\nu \sum_{_{1\leq i_1<\ldots<i_r\leq \nu}} \prod_{j=1}^r F_{Y_{i_j}}\left(q^{j-1}y\right)\prod_{m=r+1}^\nu \left(1- F_{Y_{i_m}}\left( q^{i_m-(m-r)}y \right) \right),
\end{equation}
where the inner summation is over all $r$-combinations $\{i_1, \ldots, i_r\}$ of the set $\{1, \ldots,\nu\}$.
\end{proof}
\pagebreak

In the next theorem, we assume that the non ordered random variables $Y_i, i=1,2,\ldots$ are dependent, $q$-uniformly distributed on the sets $[0,q^{i-1}t], t>0$, $ i=1,\ldots,\nu$, respectively, and we use the above lemma \ref{qdistrib}, to derive the $q$-distribution function and the $q$-density function of the corresponding maximum, minimum and $k$-th $q$-ordered random variables $Y_{(k)}, k=1,\ldots,\nu$. 
\begin{theorem}
\label{kordqunif} Let $Y_{1},\ldots,Y_{\nu}$ be dependent $q$-continuous random variables, $q$-uniformly distributed on the sets $[0,q^{i-1}t]$, $t>0$, $ i=1,\ldots,\nu$, respectively.
Assume that the random variables $Y_i$, $i=1,\ldots,\nu$, satisfy the dependence relations~\eqref{qdep1},~\eqref{qdep2},~\eqref{qdep3}. Then, for $y \in [0,t]$,
we have the following $q$-distribution functions and $q$-density functions:

\begin{itemize}
\item For the maximum $q$-ordered random variable $Y_{(\nu)}=\max\{Y_{1},\ldots,Y_{\nu}\}$ we have
\begin{equation}
\label{max2}
F_{Y_{(\nu)}}(y)=\frac{y^\nu}{t^\nu} 
\end{equation}
and
\begin{equation}
\label{max3}
f_{Y_{(\nu)}}(y)=[\nu]_q\frac{y^{\nu-1}}{t^\nu}.
\end{equation}
\item  For the $q$-distribution function and $q$-density function of the minimum $q$-ordered random variable $Y_{(1)}=\min\{Y_{1},\ldots,Y_{\nu}\}$ we have
\begin{equation}
\label{min2}
F_{Y_{(1)}}(y)=1-\prod_{i=1}^\nu \left(1-\frac{y}{q^{i-1}t }\right)
\end{equation}
and
\begin{equation}
\label{min3}
f_{Y_{(1)}}(y)=\frac{[\nu]_q}{q^{\nu-1}t}\prod_{i=1}^{\nu-1} \left(1-\frac{y}{q^{i-1}t }\right).
\end{equation}
\item 
For the $q$-distribution function and the $q$-density function of the $k$-th $q$-ordered random variable $Y_{(k)}$  we have
\begin{equation}
\label{general2}
F_{Y_{(k)}}(y)=\sum_{r=k}^\nu \qbinm{\nu}{r}\frac{y^r}{t^r}\prod_{i=1}^{\nu-r} \left(1-\frac{y}{q^{i-1}t} \right)
\end{equation}
and 
\begin{equation}
\label{general3}
f_{Y_{(k)}}(y)=\frac{[\nu]_q!q^{\binom{\nu-k}{2}}}{[k-1]_q![\nu-k]_q!q^{\binom{\nu}{2}-\binom{k}{2}}}\frac{y^{k-1}}{t^k} \prod_{j=1}^{\nu-k}\left(1-\frac{y}{q^{i-1}t} \right).
\end{equation}
\end{itemize}
\end{theorem}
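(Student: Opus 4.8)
The plan is to instantiate Lemma~\ref{qdistrib} with the $q$-uniform distribution functions and then to take $q$-derivatives. First I would record that, since $Y_i$ is $q$-uniform on $[0,q^{i-1}t]$, its $q$-distribution function is $F_{Y_i}(y)=y/(q^{i-1}t)$, exactly as in Remark~\ref{compsition}; these all share the same functional form and $Y_i$ is defined on $R_{Y_i}$ as in~\eqref{sets}, so Lemma~\ref{qdistrib} applies with $\beta=t$. For the maximum I would read off from~\eqref{max} that $F_{Y_{(\nu)}}(y)=\prod_{i=1}^{\nu}F_{Y_i}(q^{i-1}y)=\prod_{i=1}^{\nu}\frac{q^{i-1}y}{q^{i-1}t}=(y/t)^{\nu}$, which is~\eqref{max2}, and then apply $\dq$ together with $\dq y^{\nu}=[\nu]_q y^{\nu-1}$ to get~\eqref{max3}. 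For the minimum,~\eqref{min} becomes~\eqref{min2} after substituting $F_{Y_i}$, and~\eqref{min3} will follow from the elementary identity
\begin{equation*}
\dq\prod_{i=1}^{m}\Bigl(1-\frac{y}{q^{i-1}t}\Bigr)=-\,\frac{[m]_q}{q^{m-1}t}\prod_{i=1}^{m-1}\Bigl(1-\frac{y}{q^{i-1}t}\Bigr),
\end{equation*}
taken at $m=\nu$, which one proves directly from the definition~\eqref{qdiff} of $\dq$ by extracting the common factors of $f(y)$ and $f(qy)$.

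For the $k$-th $q$-ordered variable I would substitute the same $F_{Y_i}$ into~\eqref{general}. In a summand indexed by an $r$-combination $\{i_1,\dots,i_r\}$ the ``upper'' factors contribute
\begin{equation*}
\prod_{j=1}^{r}F_{Y_{i_j}}(q^{j-1}y)=\frac{y^{r}}{t^{r}}\,q^{\binom{r+1}{2}-(i_1+\cdots+i_r)},
\end{equation*}
so summing over all $r$-combinations and invoking~\eqref{combinations} with $q$ replaced by $1/q$ produces $\qbinm{\nu}{r}\,y^{r}/t^{r}$; meanwhile the ``lower'' factors simplify to $F_{Y_{i_m}}(q^{i_m-(m-r)}y)=y/(q^{m-r-1}t)$, which no longer depends on the combination, whence $\prod_{m=r+1}^{\nu}\bigl(1-F_{Y_{i_m}}(q^{i_m-(m-r)}y)\bigr)=\prod_{i=1}^{\nu-r}\bigl(1-y/(q^{i-1}t)\bigr)$. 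Combining the two observations yields~\eqref{general2}.

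To obtain~\eqref{general3} I would differentiate~\eqref{general2} term by term with the $q$-Leibniz rule $\dq(fg)=g\,\dq f+f(qy)\,\dq g$ and the product formula displayed above; the $r$-th term becomes $\qbinm{\nu}{r}(A_r-B_r)$, where $A_r$ carries the factor $y^{r-1}t^{-r}\prod_{i=1}^{\nu-r}(1-y/(q^{i-1}t))$ and $B_r$ carries the factor $y^{r}t^{-r-1}\prod_{i=1}^{\nu-r-1}(1-y/(q^{i-1}t))$, so $B_r$ and $A_{r+1}$ have the same $y,t$-dependence. The plan is then to show the sum telescopes: using $\qbin{\nu}{r+1}[r+1]_q=\qbin{\nu}{r}[\nu-r]_q$ together with $\qbinm{\nu}{r}=q^{-r(\nu-r)}\qbin{\nu}{r}$ and a matching of the attendant powers of $q$, one checks that $\qbinm{\nu}{r+1}A_{r+1}$ exactly cancels $\qbinm{\nu}{r}B_r$ for $k\le r\le\nu-1$, while the leftover top term $B_\nu$ vanishes because $[0]_q=0$. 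What survives is
\begin{equation*}
f_{Y_{(k)}}(y)=\qbinm{\nu}{k}\,[k]_q\,\frac{y^{k-1}}{t^{k}}\prod_{i=1}^{\nu-k}\Bigl(1-\frac{y}{q^{i-1}t}\Bigr),
\end{equation*}
and rewriting $\qbinm{\nu}{k}[k]_q=q^{-k(\nu-k)}\,\frac{[\nu]_q!}{[k-1]_q![\nu-k]_q!}$ and using $\binom{\nu-k}{2}-\binom{\nu}{2}+\binom{k}{2}=-k(\nu-k)$ turns this into~\eqref{general3}.

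I expect the main obstacle to be precisely this last telescoping for $f_{Y_{(k)}}$: one must keep careful track of the powers of $q$ carried by each $A_r$ and $B_r$ to confirm that the consecutive pieces coincide. Everything else reduces to substituting the $q$-uniform distribution functions into Lemma~\ref{qdistrib}, to the weighted-combinations identity~\eqref{combinations}, and to elementary $q$-calculus ($\dq y^{n}=[n]_q y^{n-1}$ and the product $q$-derivative).
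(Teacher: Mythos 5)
Your proposal is correct and follows the same skeleton as the paper's proof: substitute the $q$-uniform distribution functions $F_{Y_i}(y)=y/(q^{i-1}t)$ into Lemma~\ref{qdistrib}, use the weighted-combinations identity~\eqref{combinations} (with $q\mapsto 1/q$) to collapse the inner sum into $\qbinm{\nu}{r}$, and then apply $\dq$. Where you genuinely diverge is in computing the densities: the paper expands each product $\prod_{i=1}^{m}\bigl(1-y/(q^{i-1}t)\bigr)$ via the $q$-binomial formula~\eqref{qbinom1}, differentiates the resulting sum term by term, and re-sums, whereas you differentiate the products directly. Your closed-form identity $\dq\prod_{i=1}^{m}\bigl(1-y/(q^{i-1}t)\bigr)=-\frac{[m]_q}{q^{m-1}t}\prod_{i=1}^{m-1}\bigl(1-y/(q^{i-1}t)\bigr)$ is valid (the shifted product shares the middle $m-1$ factors with the unshifted one, and the two extreme factors combine to $-\frac{(1-q^{m})y}{q^{m-1}t}$), and the telescoping for $f_{Y_{(k)}}$ checks out: with $A_r=\qbinm{\nu}{r}[r]_q\,y^{r-1}t^{-r}\prod_{i=1}^{\nu-r}\bigl(1-y/(q^{i-1}t)\bigr)$ and $B_r=\qbinm{\nu}{r}[\nu-r]_q\,q^{2r-\nu+1}y^{r}t^{-r-1}\prod_{i=1}^{\nu-r-1}\bigl(1-y/(q^{i-1}t)\bigr)$, the identity $\qbin{\nu}{r}[\nu-r]_q=\qbin{\nu}{r+1}[r+1]_q$ together with the exponent match $-r(\nu-r)+2r-\nu+1=-(r+1)(\nu-r-1)$ gives $B_r=A_{r+1}$, so the sum collapses to $A_k$, which agrees with~\eqref{general3} via $\binom{\nu-k}{2}+\binom{k}{2}-\binom{\nu}{2}=-k(\nu-k)$. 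The paper performs the same cancellation, only after reindexing $q$-binomial sums, so your route is the more transparent of the two; nothing is missing.
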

\begin{proof}
The theorem assumptions allow us to use Equation~\eqref{max}; the $q$-distribution function of $Y_{(\nu)}$ is thus 
\begin{equation}
F_{Y_{(\nu)}}(y)
=\prod_{i=1}^\nu F_{Y_i}(q^{i-1}y)=
\frac{y}{t} \frac{qy}{qt}\cdots \frac{q^{\nu-1}}{q^{\nu-1}t}=\frac{y^\nu}{t^\nu}.
\end{equation}

\pagebreak

Taking the $q$-derivative of the above relation we have that the $q$-density function of $Y_{(\nu)}$ is straightforwardly given for $y \in [0,t]$ by 
\begin{equation}
\label{max3b}
f_{Y_{(\nu)}}(y)=d_q  F_{Y_{(\nu)}}(y)=[\nu]_q\frac{y^{\nu-1}}{t^\nu}.
\end{equation}
Note that 
\begin{equation*}\int_0^t f_{Y_{(\nu)}}(y) d_qy=\int_0^t [\nu]_q\frac{y^{\nu-1}}{t^\nu}d_qy=1,\end{equation*}
which is coherent with the fact we have here a $q$-density function.
\newline
Also, the $q$-distribution function of $Y_{(1)}$, by Equation~\eqref{min} of the previous lemma~\ref{qdistrib}, is straightforwardly given for $y \in [0,t]$ by
 \begin{equation}
F_{Y_{(1)}}(y)=
1-\prod_{i=1}^\nu \left(1-F_{Y_i}(y) \right)=1-\prod_{i=1}^\nu \left(1-\frac{y}{t} \right).
\end{equation}
Taking the $q$-derivative of the above relation and using the $q$-binomial formula~\eqref{qbinom1}, we have that the $q$-density function of $Y_{(\nu)}$ is expressed as
\begin{align}
\label{min3b}
f_{Y_{(1)}}(y)&= d_q  F_{Y_{(1)}}(y)=-\sum_{k=0}^\nu (-1)^k q^{- \binom{k}{2}} \qbinm{\nu}{k}\frac{[k]_qy^{k-1}}{t^k}
\nonumber\\&=\frac{[n]_q}{t}\sum_{k=0}^{\nu-1} (-1)^{k-1} q^{- \binom{k}{2}}q^{-k(\nu-k)}\qbin{\nu-1}{k-1} \frac{y^{k-1}}{t^{k-1}}
\nonumber\\&=\frac{[n]_q}{q^{\nu-1}t} \sum_{k=0}^{\nu-1} (-1)^{k-1} q^{- \binom{k-1}{2}}q^{-(k-1)(\nu-k)} \qbin{\nu-1}{k-1} \frac{y^{k-1}}{t^{k-1}}
\nonumber\\&=\frac{[n]_q}{q^{\nu-1}t} \sum_{k=0}^{\nu-1} (-1)^{k-1} q^{- \binom{k-1}{2}}\qbinm{\nu-1}{k-1}\frac{y^{k-1}}{t^{k-1}}
=\frac{[\nu]_q}{q^{\nu-1}t}\prod_{i=1}^{\nu-1} \left(1-\frac{y}{q^{i-1}t }\right).
\end{align}
Note that using the $q$-binomial formula~\eqref{qbinom1} and the $q$-identity~\eqref{q-identity1}, we obtain
\begin{align}
\label{densconf1}
\int_0^t f_{Y_{(1)}}(y) d_qy&=\frac{[\nu]_q}{q^{\nu-1}}\sum_{j=0}^{\nu-1}(-1)^j q^{-\binom{j}{2}}
\qbinm{\nu-1}{j}\int_0^t\frac{y^j}{t^{j+1}}d_qy\nonumber\\&=\frac{[\nu]_q}{q^{\nu-1}}\sum_{j=0}^{\nu-1}(-1)^jq^{-\binom{j}{2}}q^{-j(\nu-1-j)}
\qbin{\nu-1}{j}\frac{1}{[j+1]_q}\nonumber\\
&=[\nu]_q\sum_{j=0}^{\nu-1}(-1)^jq^{\binom{j+1}{2}}q^{-(j+1)(\nu-1)}\qbin{\nu-1}{j}\frac{1}{[j+1]_q}=[\nu]_q\frac{1}{\qbin{\nu}{\nu-1}
}=1,
\end{align}
which is coherent with the fact we have here a $q$-density function.
\pagebreak

Moreover, thanks to Equation~\eqref{general}, the $q$-distribution function of $Y_{(k)}$ is given by
\begin{align}
F_{Y_{(k)}}(y)&=
 \sum_{r=k}^\nu \sum_{_{1\leq i_1<\ldots<i_r\leq \nu}} \prod_{j=1}^r F_{Y_{i_j}}\left(q^{j-1}y\right)\prod_{m=r+1}^\nu \left(1- F_{Y_{i_m}}\left( q^{i_m-(m-r-1)}y \right) \right)\nonumber\\&=\sum_{r=k}^\nu \sum_{_{1\leq i_1<\ldots<i_r\leq \nu}}\frac{y}{q^{i_1-1}t}\frac{qy}{q^{i_2-1}t}\cdots\frac{q^{r-1}y}{q^{i_r-1}t}\left(1-\frac{y}{t}\right)\left(1-\frac{y}{qt}\right)\cdots\left(1-\frac{y}{q^{\nu-r-1}t}\right)
\nonumber\\
&=\sum_{r=k}^\nu\frac{y^r}{t^r}\prod_{i=1}^{\nu-r}\left(1-\frac{y}{q^{i-1}t}\right) \sum_{_{1\leq i_1<\ldots<i_r\leq \nu}}q^{-i_1-\cdots-i_r+\binom{r+1}{2}},
\end{align}
where the inner summation is over the $r$-combinations $\{i_1, \ldots, i_r\}$ of the set $\{1, \ldots,\nu\}$.

Applying the formula of the $q$-binomial coefficient~\eqref{combinations} in the above equation, we obtain
\begin{equation}
\label{general2b}
F_{Y_{(k)}}(y)=\sum_{r=k}^\nu \qbinm{\nu}{r}\frac{y^r}{t^r}\prod_{i=1}^{\nu-r} \left(1-\frac{y}{q^{i-1}t} \right), y \in [0,t].
\end{equation}
Taking the $q$-derivative of the above $q$-distribution function, using suitably the $q$-binomial formula~\eqref{qbinom1} and conducting all the needed algebraic manipulations, we have that the $q$-density function of $Y_{(k)}$ for $1\leq k \leq \nu$ is expressed (for $y \in [0,t]$) as
\begin{align} \label{general3b}
f_{Y_{(k)}}(y)&=d_q  F_{Y_{(k)}}(y)=\sum_{r=k}^\nu q^{-r(\nu-r)} \qbin{\nu}{r} [r]_q\frac{y^{r-1}}{t^r}\prod_{i=1}^{\nu-r} \left(1-\frac{y}{q^{i-1}t} \right)\nonumber\\
& + \sum_{r=k}^\nu q^{-r(\nu-r)}\qbin{\nu}{r}\frac{q^ry^{r}}{t^r}\sum_{j=0}^{\nu-r} (-1)^jq^{-\binom{j}{2}-j(\nu -r-j)} \qbin{\nu-r}{j} [j]_q \frac{y^{j-1}}{t^j}\nonumber\\
&= \frac{[\nu]_q}{t} \sum_{r=k}^\nu q^{-(\nu-r)} q^{-(r-1)(\nu-r)}\qbin{\nu-1}{r-1}\frac{y^{r-1}}{t^{r-1}}\prod_{i=1}^{\nu-r} \left(1-\frac{y}{q^{i-1}t} \right)\nonumber\\
&\phantom{=} -\frac{1}{t}\sum_{r=k}^\nu q^{-r(\nu-r)}\qbin{\nu}{r}\frac{q^ry^{r}}{t^r} [\nu-r]_q q^{-(\nu-r-1)}\nonumber\\
&\hspace{1.2cm} \times \sum_{j=0}^{\nu-r} (-1)^{j-1}q^{- \binom{j-1}{2}-(j-1)(\nu -r-j)}\qbin{\nu-r-1}{j-1} \frac{y^{j-1}}{t^{j-1}}\nonumber\\
&=\frac{[\nu]_q}{t} \sum_{r=k}^\nu q^{-(\nu-r)} q^{-(r-1)(\nu-r)}\qbin{\nu-1}{r-1} \frac{y^{r-1}}{t^{r-1}}\prod_{i=1}^{\nu-r} \left(1-\frac{y}{q^{i-1}t} \right)\nonumber\\
  &\phantom{=}-\frac{[\nu]_q}{t} \sum_{r=k}^{\nu -1} q^{-(\nu-r-1)}q^{-r(\nu-1-r)}\qbin{\nu-1}{r}\frac{y^{r}}{t^r} \prod_{i=1}^{\nu-r-1} \left(1-\frac{y}{q^{i-1}t} \right)\nonumber\\
&=\frac{[\nu]_q}{t} q^{-(\nu-k)} q^{-(k-1)(\nu-k)}\qbin{\nu-1}{k-1}\frac{y^{k-1}}{t^{k-1}}\prod_{i=1}^{\nu-k} \left(1-\frac{y}{q^{i-1}t} \right)\nonumber\\
&=q^{-k(\nu-k)} \frac{[\nu]_q!}{[k-1]_q![\nu-k]_q!}\frac{y^{k-1}}{t^k} \prod_{j=1}^{\nu-k}\left(1-\frac{y}{q^{i-1}t} \right)\nonumber\\
&=\frac{[\nu]_q!q^{\binom{\nu-k}{2}}}{[k-1]_q![\nu-k]_q!q^{\binom{\nu}{2}-\binom{k}{2}}}\frac{y^{k-1}}{t^k} \prod_{j=1}^{\nu-k}\left(1-\frac{y}{q^{i-1}t} \right).
\end{align}

\pagebreak

Note that using suitably the $q$-binomial formula~\eqref{qbinom1}, the $q$-identity~\eqref{q-identity1} and carrying out all the needed algebraic manipulations, we obtain
\begin{align}
\label{densconf2}
\int_0^t f_{Y_{(k)}}(y) d_qy&=\frac{[\nu]_q!q^{\binom{\nu-k}{2}}}{[k-1]_q![\nu-k]_q!q^{\binom{\nu}{2}-\binom{k}{2}}t}\int_0^t\frac{y^{k-1}}{t^{k-1}} \prod_{j=1}^{\nu-k}\left(1-\frac{y}{q^{i-1}t} \right)d_qy\nonumber\\
&=q^{-k(\nu-k)}\frac{[\nu]_q!}{[k-1]_q![\nu-k]_q!}\sum_{j=0}^{\nu-k}(-1)^j q^{-\binom{j}{2}}
\qbinm{\nu-k}{j} \int_0^t\frac{y^{k+j-1}}{t^{j+k}}d_qy\nonumber\\
&=q^{-k(\nu-k)}\frac{[\nu]_q!}{[k]_q![\nu-k]_q!}\sum_{j=0}^{\nu-k}(-1)^j q^{-\binom{j}{2}}q^{-j(\nu-k-j)} \qbin{\nu-k}{j}\frac{[k]_q}{[k+j]_q}\nonumber\\
&=\frac{[\nu]_q!}{[k]_q![\nu-k]_q!}\sum_{j=0}^{\nu-k}(-1)^j q^{\binom{j+1}{2}-(\nu-k)(k+j)}
\qbin{\nu-k}{j} \frac{[k]_q}{[k+j]_q}\nonumber\\
&=\frac{[\nu]_q!}{[k]_q![\nu-k]_q!}\frac{1}{\qbin{\nu}{\nu-k}}=1,
\end{align}
which is coherent with the fact we have here a $q$-density function.
\end{proof}
 \begin{remark}
 \label{qpowerlowqbetadistr}
 The random variables $Y_{(1)}$ and $Y_{(\nu)}$ follow $q$-power law  distributions
 (see Formulas~\eqref{max3} and~\eqref{min3})
 while the random variables $Y_{(2)},\dots,Y_{(\nu-1)}$
 follow $q$-beta  distributions (see~Formula~\eqref{general3}).
 \end{remark}
 In the following lemma, we consider the non-ordered 
 $q$-continuous random variables, $Y_{1},\ldots,Y_{\nu}$, being dependent and not identically distributed, and 
 we derive the joint $q$-distribution function of the
 $q$-ordered random variables, $Y_{(1)}$ and $Y_{(\nu)}$
 that satisfy inequalities~\eqref{qordered}.
\begin{lemma}
\label{qdistrib2}
 Let $Y_{1},\ldots,Y_{\nu}$ be dependent $q$-continuous random variables, where 
 \begin{itemize}
\item[(a)] Each $Y_i$ is defined on the set $R_{Y_i}$ from Formula~\eqref{sets}.
\item[(b)] Each $Y_i$ has a $q$-distribution function $F_{Y_i}(y)=P\left(Y_i\leq y\right)$, for $y \in R_{Y_i}$, of the same functional form and satisfy the dependence relations~\eqref{qdep1},~\eqref{qdep2},~\eqref{qdep3}.
\end{itemize}
Then,
 the joint $q$-distribution function of the $q$-ordered random variables 
 \begin{equation*}
 Y_{(1)}=\min\{Y_1,\ldots,Y_{\nu}\}  \text{\quad and \quad} Y_{(\nu)}=\max\{Y_{1},\ldots,Y_{\nu}\},
 \end{equation*} 
 is given by
\begin{equation}
\label{minmax}
F_{Y_{(1)},Y_{(\nu)}}(y,z)=\prod_{i=1}^\nu F_{Y_i}(q^{i-1}z)-\prod_{i=1}^\nu \left(F_{Y_i}(q^{i-1}z)-F_{Y_i}(y)\right)
\end{equation}
with $ y<q^{\nu-1}z,\nu\geq 1, y,z \in [0, \beta]$.
\end{lemma}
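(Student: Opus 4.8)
The plan is to follow the classical derivation of the joint distribution of the extreme order statistics, in which the ordinary products of distribution functions are replaced by the $q$-shifted products furnished by Lemma~\ref{qdistrib}. Since the hypothesis $y<q^{\nu-1}z$ forces in particular $y<z$, the event $\{Y_{(\nu)}\le z\}$ is the disjoint union of $\{Y_{(1)}\le y,\ Y_{(\nu)}\le z\}$ and $\{Y_{(1)}>y,\ Y_{(\nu)}\le z\}$, so that
\begin{equation*}
F_{Y_{(1)},Y_{(\nu)}}(y,z)=P\bigl(Y_{(\nu)}\le z\bigr)-P\bigl(Y_{(1)}>y,\ Y_{(\nu)}\le z\bigr).
\end{equation*}
Because $z\in[0,\beta]$ gives $q^{i-1}z\in R_{Y_i}$, and $y<q^{\nu-1}z\le q^{\nu-1}\beta\le q^{i-1}\beta$ gives $y\in R_{Y_i}$, for every $i=1,\dots,\nu$, the first term is immediately $\prod_{i=1}^{\nu}F_{Y_i}(q^{i-1}z)$ by formula~\eqref{max} of Lemma~\ref{qdistrib}.

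It remains to show that $P\bigl(Y_{(1)}>y,\ Y_{(\nu)}\le z\bigr)=\prod_{i=1}^{\nu}\bigl(F_{Y_i}(q^{i-1}z)-F_{Y_i}(y)\bigr)$. Here $\{Y_{(1)}>y,\ Y_{(\nu)}\le z\}=\bigcap_{i=1}^{\nu}\{y<Y_i\le z\}$, and I would compute the probability of this intersection by the chain rule taken in the natural order of the indices, writing each successive factor as
\begin{equation*}
P\bigl(y<Y_j\le z\mid y<Y_i\le z,\ i<j\bigr)=P\bigl(Y_j\le z\mid\cdots\bigr)-P\bigl(Y_j\le y\mid\cdots\bigr),
\end{equation*}
and evaluating the two conditional probabilities via the dependence relations~\eqref{qdep1},~\eqref{qdep2},~\eqref{qdep3}: conditioning on the $j-1$ events $\{Y_i\le z\}$, the upper halves of the already-fixed band events, shifts the upper threshold $z$ into $q^{j-1}z$ through~\eqref{qdep1}; conditioning on the lower halves $\{Y_i>y\}$ leaves the lower threshold $y$ unchanged through~\eqref{qdep2}; and~\eqref{qdep3} governs the combined effect. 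Each factor then collapses to $F_{Y_j}(q^{j-1}z)-F_{Y_j}(y)$, and the product over $j$ is the asserted expression. An equivalent route is to partition $\{Y_{(1)}\le y,\ Y_{(\nu)}\le z\}$ according to the $r\ge1$ indices at which $Y_i\le y$, the remaining ones lying in $(y,z]$, exactly as in the proof of~\eqref{general}, and to resum the contributions; this leads to the same calculation.

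Substituting the two evaluations into the displayed decomposition yields~\eqref{minmax}. The one genuinely delicate point is the second step: along the chain-rule expansion one must keep track of precisely how many conditions of ``$\le z$'' type and of ``$>y$'' type have already been imposed, and on which indices, since the power of $q$ produced by~\eqref{qdep1}--\eqref{qdep3} depends on these data; and since each band event $\{y<Y_i\le z\}$ carries two thresholds, the single-threshold relations~\eqref{qdep1}--\eqref{qdep3} have to be applied separately to its two halves. This bookkeeping, rather than any new ingredient, is where the effort lies; everything else runs parallel to the proofs of Lemma~\ref{qdistrib} and of the classical minimum--maximum joint distribution.
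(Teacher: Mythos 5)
Your proposal follows the same route as the paper: the decomposition $F_{Y_{(1)},Y_{(\nu)}}(y,z)=P(Y_{(\nu)}\le z)-P(Y_{(1)}>y,\,Y_{(\nu)}\le z)$, the chain-rule expansion of $P\bigl(\bigcap_i\{y<Y_i\le z\}\bigr)$, and the evaluation of each conditional factor via the dependence relations~\eqref{qdep1}--\eqref{qdep3} so that the $i$-th factor becomes $F_{Y_i}(q^{i-1}z)-F_{Y_i}(y)$. The bookkeeping issue you flag (applying the single-threshold relations to the two halves of each band event) is passed over just as quickly in the paper, which simply invokes assumptions (a) and (b) at that step.
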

\pagebreak

\begin{proof}
Let $F_{Y_{(1)},Y_{(\nu)}}(y,z)$, $ y<q^{\nu-1}z,\nu\geq 1, y,z \in [0, \beta]$, be the joint $q$-distribution function of the random variables $Y_{(1)}$ and $Y_{(\nu)}$. 
Using the expression 
\begin{equation*}P\left(Y_{(1)}\leq y,Y_{(\nu)}\leq z\right)=P\left(Y_{(\nu)}\leq z \right)-P\left(Y_{(1)}>y,Y_{(\nu)}\leq z \right)\end{equation*}
we then have
\begin{align}
\label{minmax2}
&F_{Y_{(1)},Y_{(\nu)}}(y,z)=P\left(Y_{(1)}\leq y,Y_{(\nu)}\leq z\right)=P\left(Y_{(\nu)}\leq z \right)-P\left(Y_{(1)}>y,Y_{(\nu)}\leq z \right)\nonumber\\
&=P\left( Y_1 \leq z,Y_2 \leq z,\ldots,Y_{\nu} \leq z \right)-P\left( y <Y_{1} \leq z,y<Y_{2} \leq z,\ldots,y<Y_{\nu} \leq z \right)\nonumber\\
&=P\left( Y_1 \leq z) P(Y_2 \leq z \vert Y_1 \leq z) \cdots P(Y_{\nu} \leq z \vert Y_{1} \leq z, Y_{2} \leq z,\ldots, Y_{\nu-1} \leq z \right)\nonumber\\
&\phantom{=}-P( y<Y_{1} \leq z) P(y<Y_{2} \leq z \vert y<Y_{1} \leq z) \cdots \nonumber\\ 
&\phantom{=-}\cdot P(y<Y_{\nu} \leq z \vert y<Y_{1} \leq z, y<Y_{2} \leq z,\ldots, y<Y_{\nu-1} \leq z ).
\end{align}
By assumptions (a) and (b), Equation~\eqref{minmax2} becomes (for  $y,z \in [0, \beta]$ such that $y<q^{\nu-1}z$):
\begin{equation*}
F_{Y_{(1)},Y_{(\nu)}}(y,z)=\prod_{i=1}^\nu F_{Y_i}(q^{i-1}z)-\prod_{i=1}^\nu \left(F_{Y_i}(q^{i-1}z)-F_{Y_i}(y)\right).\qedhere
\end{equation*}
\end{proof}
In the next theorem, we assume that the non ordered random variables $Y_i$ are dependent and $q$-uniformly distributed on the sets $[0,q^{i-1}t]$ (for $t>0$), and we use the above lemma~\ref{qdistrib2}, to derive the joint $q$-distribution function and the joint $q$-density function of the
 $q$-ordered random variables $Y_{(1)}$ and $Y_{(\nu)}$.
\begin{theorem}
\label{minmaxqunif}
 Let $Y_{1},\ldots,Y_{\nu}$ be dependent $q$-continuous random variables, $q$-uniformly distributed on the sets $[0,q^{i-1}t], t>0$, $ i=1,\ldots,\nu$, respectively.
Assume that the random variables $Y_i$ satisfy the dependence relations~\eqref{qdep1},~\eqref{qdep2},~\eqref{qdep3}. Then,
 the joint $q$-distribution function and the joint $q$-density function of the $q$-ordered random variables, $Y_{(1)}=\min\{Y_1,\ldots,Y_{\nu}\}$ and $Y_{(\nu)}=\max\{Y_{1},\ldots,Y_{\nu}\}$ are given respectively by
 \begin{equation}
 \label{jointminmax}
 F_{Y_{(1)},Y_{(\nu)}}(y,z)=\frac{z^\nu}{t^\nu}-\frac{z^\nu}{t^\nu}\prod_{i=1}^\nu \left(1-\frac{y}{q^{i-1}z}\right)
\end{equation}
 and
 \begin{equation}
 \label{jointdensitminmax}
 f_{Y_{(1)},Y_{(\nu)}}(y,z)=q^{-\nu+1}[\nu]_q[\nu-1]_q\frac{z^{\nu-2}}{t^\nu}\prod_{i=1}^{\nu-2}\left(1-\frac{y}{q^{i}z}\right)
\end{equation}
with $ y<q^{\nu-1}z,\nu\geq 1, y,z \in [0,t]$.
 \end{theorem}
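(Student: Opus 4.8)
The first formula is an immediate consequence of Lemma~\ref{qdistrib2}. Since each $Y_i$ is $q$-uniformly distributed on $[0,q^{i-1}t]$, its $q$-distribution function is $F_{Y_i}(w)=\int_0^{w}f_q(x)\,\dq x=w/(q^{i-1}t)$ for $w\in[0,q^{i-1}t]$ (this is the computation in Remark~\ref{compsition}). Hence $F_{Y_i}(q^{i-1}z)=z/t$ and, for $y<q^{\nu-1}z\le q^{\nu-1}t\le q^{i-1}t$ (so that $y$ lies in the domain of every $F_{Y_i}$), $F_{Y_i}(q^{i-1}z)-F_{Y_i}(y)=(z/t)\bigl(1-y/(q^{i-1}z)\bigr)$. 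Substituting these two identities into~\eqref{minmax} and factoring out $(z/t)^\nu$ gives exactly~\eqref{jointminmax}, valid for $y<q^{\nu-1}z$, $y,z\in[0,t]$.

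For the joint $q$-density I would apply the second-order partial $q$-derivative $(\dq z)(\dq y)$ to~\eqref{jointminmax}, as prescribed by~\eqref{multjointqdens}. It is cleanest to first linearise the product in~\eqref{jointminmax} via the $q$-binomial formula~\eqref{qbinom1} with $t\mapsto y/z$ and $n\mapsto\nu$; the leading term $z^\nu/t^\nu$ is cancelled by the $k=0$ summand, leaving
\begin{equation*}
F_{Y_{(1)},Y_{(\nu)}}(y,z)=-\sum_{k=1}^{\nu}(-1)^k q^{-\binom{k}{2}}\qbinm{\nu}{k}\,\frac{y^k z^{\nu-k}}{t^\nu}.
\end{equation*}
Since $\dq y^k=[k]_q y^{k-1}$ and $\dq z^{\nu-k}=[\nu-k]_q z^{\nu-k-1}$, applying the $q$-derivative in $y$ and then in $z$ removes the $k=\nu$ term and yields
\begin{equation*}
f_{Y_{(1)},Y_{(\nu)}}(y,z)=-\sum_{k=1}^{\nu-1}(-1)^k q^{-\binom{k}{2}}\qbinm{\nu}{k}[k]_q[\nu-k]_q\,\frac{y^{k-1}z^{\nu-k-1}}{t^\nu}.
\end{equation*}

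Next I would simplify the coefficient using $\qbinm{\nu}{k}=q^{-k(\nu-k)}\qbin{\nu}{k}$ together with $\qbin{\nu}{k}[k]_q[\nu-k]_q=[\nu]_q[\nu-1]_q\qbin{\nu-2}{k-1}$, factor out $[\nu]_q[\nu-1]_q\,z^{\nu-2}/t^\nu$, and reindex by $j=k-1$ (so $0\le j\le\nu-2$). After collecting the powers of $q$, the remaining alternating sum is, up to the factor $q^{-\nu+1}$, a $q^{-1}$-binomial expansion which~\eqref{qbinom1}, now with $n\mapsto\nu-2$ and $t\mapsto y/(qz)$, identifies with $\prod_{i=1}^{\nu-2}\bigl(1-y/(q^i z)\bigr)$; this produces precisely~\eqref{jointdensitminmax}. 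As a consistency check, one may verify in the spirit of~\eqref{densconf1}--\eqref{densconf2}, using the $q$-identity~\eqref{q-identity1}, that $\int\!\!\int f_{Y_{(1)},Y_{(\nu)}}(y,z)\,\dq y\,\dq z=1$ over the admissible region $\{0\le y<q^{\nu-1}z\le q^{\nu-1}t\}$.

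The routine ingredients are the two invocations of the $q$-binomial formula~\eqref{qbinom1}; the only delicate point is the bookkeeping of the exponents of $q$ through the $q$-differentiation and, in the second invocation, the index shift turning $\prod_{i=1}^{\nu-2}\bigl(1-(y/z)q^{-i}\bigr)$ into the standard shape $\prod_{i=1}^{\nu-2}\bigl(1-(y/(qz))q^{-(i-1)}\bigr)$ --- equivalently, checking the exponent identity $1-\nu-\binom{k}{2}-k-k(\nu-2-k)=-\binom{k+1}{2}-(k+1)(\nu-k-1)$. This mirrors the computation already carried out in the proof of Theorem~\ref{kordqunif}.
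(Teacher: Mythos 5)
Your proposal is correct and follows essentially the same route as the paper: specialize Lemma~\ref{qdistrib2} with $F_{Y_i}(w)=w/(q^{i-1}t)$ to get~\eqref{jointminmax}, then expand the product with the $q$-binomial formula~\eqref{qbinom1}, apply $\dq$ in $y$ and $z$ termwise, use $\qbin{\nu}{k}[k]_q[\nu-k]_q=[\nu]_q[\nu-1]_q\qbin{\nu-2}{k-1}$, reindex, and resum via~\eqref{qbinom1}; your exponent identity checks out and matches the paper's computation in~\eqref{jointdensitminmax2}. The only (immaterial) divergence is in the normalization check, where the paper invokes the $q^{-1}$-identity~\eqref{q-identity2} rather than~\eqref{q-identity1}.
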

\begin{proof}With the conditions of the theorem, by~Equation~\eqref{minmax}, the $q$-distribution function of the random variables $Y_{(1)}$ and $Y_{(\nu)}$ becomes
 \begin{align}
\label{minmax3}
&F_{Y_{(1)},Y_{(\nu)}}(y,z)
=\prod_{i=1}^\nu F_{Y_i}(q^{i-1}z)-\prod_{i=1}^\nu \left(F_{Y_i}(q^{i-1}z)-F_{Y_i}(y)\right) \nonumber\\&\,\,\,\,\,=\frac{z}{t}\frac{qz}{qt}\cdots\frac{q^{\nu-1}z}{q^{\nu-1}t}-\left(\frac{z}{t}-\frac{y}{t}\right)\left(\frac{qz}{qt}-\frac{y}{qt}\right)\left(\frac{q^2z}{q^2t}-\frac{y}{q^2t}\right)\cdots\left(\frac{q^{\nu-1}z}{q^{\nu-1}t}-\frac{y}{q^{\nu-1}t}\right)\nonumber\\&\,\,\,\,\,=\frac{z^\nu}{t^\nu}-\frac{z^\nu}{t^\nu}\prod_{i=1}^\nu \left(1-\frac{y}{q^{i-1}z}\right).
\end{align}
\pagebreak

Taking the partial $q$-derivatives of the above joint $q$-distribution function and using the $q$-binomial formula~\eqref{qbinom1}, we have that the joint $q$-density function of the random variables $Y_{(1)}$ and $Y_{(\nu)}$ is expressed as
 \begin{align}
 \label{jointdensitminmax2}
 f_{Y_{(1)},Y_{(\nu)}}(y,z)&=\frac{\partial F_{Y_{(1)},Y_{(\nu)}}(y,z)}{\partial_qz \partial_qy}=-\frac{1}{\partial_qz \partial_qy}\frac{z^\nu}{t^\nu}\prod_{i=1}^\nu \left(1-\frac{y}{q^{i-1}z}\right)\nonumber\\&=-\frac{1}{\partial_qz \partial_qy}\frac{z^\nu}{t^\nu}\sum_{r=0}^\nu(-1)^rq^{-\binom{r}{2}}\qbinm{\nu}{r}\frac{y^r}{z^r}\nonumber\\&=-\frac{1}{t^\nu}\sum_{r=0}^\nu(-1)^{r}q^{-\binom{r}{2}}q^{-r(\nu-r)}\qbin{\nu}{r}[r]_q[\nu-r]_q y^{r-1}z^{\nu-r-1}\nonumber\\&=\frac{z^{\nu-2}}{t^\nu}[\nu]_q[\nu-1]_q\sum_{r=0}^\nu(-1)^{r-1}q^{-\binom{r}{2}}q^{-r(\nu-r)}\qbin{\nu-2}{r-1}\frac{ y^{r-1}}{z^{r-1}}
 \nonumber\\&=\frac{z^{\nu-2}}{q^{\nu-1}t^\nu}[\nu]_q[\nu-1]_q\sum_{r=0}^\nu(-1)^{r-1}q^{-\binom{r-1}{2}}q^{-(r-1)(\nu-r-1)}\qbin{\nu-2}{r-1}\frac{ y^{r-1}}{(qz)^{r-1}} \nonumber\\&=\frac{z^{\nu-2}}{q^{\nu-1}t^\nu}[\nu]_q[\nu-1]_q\sum_{j=0}^\nu(-1)^{j}q^{-\binom{j}{2}}q^{-j(\nu-2-j)}\qbin{\nu-2}{j}\frac{ y^{j}}{(qz)^{j}}\nonumber\\&=q^{-\nu+1}[\nu]_q[\nu-1]_q\frac{z^{\nu-2}}{t^\nu}\prod_{i=1}^{\nu-2}\left(1-\frac{y}{q^{i}z}\right).
\end{align}

Note that using suitably the $q$-binomial formula~\eqref{qbinom1}, the $q^{-1}$-identity~\eqref{q-identity2} and carrying out all the needed algebraic manipulations, we obtain
\begin{align}
\label{jointdensminmax3}
& \int_0^t \int_0^{q^{\nu-1}z}f_{Y_{(1)},Y_{(\nu)}}(y,z)d_qy d_qz\nonumber\\&=\frac{q^{-\nu+1}}{t^\nu}[\nu]_q[\nu-1]_q
\sum_{j=0}^\nu(-1)^{j}q^{-\binom{j}{2}}q^{-j(\nu-2-j)-j}\qbin{\nu-2}{j} \int_0^t \int_0^{q^{\nu-1}z}y^jz^{\nu-2-j}d_qy d_qz\nonumber\\&=\frac{q^{-\nu+1}}{t^\nu}[\nu]_q[\nu-1]_q \sum_{j=0}^\nu(-1)^{j}q^{-\binom{j}{2}}q^{-j(\nu-2-j)-j}\qbin{\nu-2}{j}\frac{q^{(\nu-1)(j+1)}t^\nu}{[j+1]_q[\nu]_q}\nonumber\\&=q^{-\nu+2}[\nu-1]_q \sum_{j=0}^\nu(-1)^{j}q^{\binom{j+1}{2}+\nu-2}\qbin{\nu-2}{j}\frac{[1]_q}{[1+j]_q}\nonumber\\&=q^{-\nu+2}[\nu-1]_q\frac{1}{\qbinm{\nu-1}{\nu-2}}=1
 \end{align}
 which is coherent with the fact we have here a $q$-density function.
\end{proof}
In the following lemma, we consider the non-ordered 
 $q$-continuous random variables, $Y_{1},\ldots,Y_{\nu}$, being dependent and not identically distributed, and 
 we derive the joint $q$-distribution function of the
 $q$-ordered random variables, $Y_{(k)}$ and $Y_{(r)}, \,1\leq k <r\leq \nu$. 
 
 \pagebreak
 
\begin{lemma}
\label{qdistribjointkr} 
 Let $Y_{1},\ldots,Y_{\nu}$ be dependent $q$-continuous random variables, where 
 \begin{itemize}
\item[(a)] Each $Y_i$ is defined on the set $R_{Y_i}$ from Formula~\eqref{sets}.
\item[(b)] Each $Y_i$ has a $q$-distribution function $F_{Y_i}(y)=P\left(Y_i\leq y\right), y \in R_{Y_i}$ of the same functional form and satisfies the dependence relations~\eqref{qdep1},~\eqref{qdep2},~\eqref{qdep3}.
\end{itemize}
Then,
 the joint $q$-distribution function of the $q$-ordered random variables $Y_{(k)}$ and $Y_{(r)}$ for $1\leq k <r\leq \nu$, 
 where $Y_{(i)}$, $i=1,\dots,\nu$, satisfy inequalities~\eqref{qordered}, 
 is given by
\begin{align}
\label{jointkr}
&F_{Y_{(k)},Y_{(r)}}(y,z)\nonumber\\
&=\sum_{j=r}^\nu \sum_{s=k}^j \sum \prod_{n_1=1}^s F_{Y_{i_{n_1}}}\left(q^{n_1-1}y\right)
\prod_{ni_1, \ldots, i_r=s+1}^j \left( F_{Y_{i_{n_2}}}\left(q^{n_2-s-1}z\right)- F_{Y_{i_{n_2}}}\left(y\right)\right)\nonumber\\&\,\,\,\,\cdot
\prod_{n_3=j+1}^\nu \left(1- F_{Y_{i_{n_3}}}\left( q^{i_{n_3}-(n_3-j)}z \right) \right), y<q^{r-k}z, 1\leq k <r\leq \nu, y,z \in [0,\beta],
\end{align}
where  the inner summation is over all pairwise disjoint subsets $\{i_1,\ldots,i_s \}$ and $\{i_{s+1},\ldots,i_{j} \}$ of the set $\{1,\ldots,\nu\}$ with
$1\leq i_1<\cdots<i_s \leq \nu$ and $1\leq i_{s+1}< i_{s+2}<\cdots<i_{j} \leq \nu$.
\end{lemma}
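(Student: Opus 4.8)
The plan is to follow the pattern of the proofs of Lemmas~\ref{qdistrib} and~\ref{qdistrib2}: classify the non-ordered variables $Y_1,\dots,Y_\nu$ according to their position relative to the two thresholds $y$ and $z$, factor the resulting joint probabilities by the multiplication rule, and simplify each conditional factor through the dependence relations~\eqref{qdep1}--\eqref{qdep3}. Since $y<q^{r-k}z\le z$, the event $\{Y_{(k)}\le y,\ Y_{(r)}\le z\}$ occurs precisely when at least $k$ of the $Y_i$ lie in $[0,y]$ and at least $r$ of them lie in $[0,z]$. Letting $s=\#\{i:Y_i\le y\}$ and $j=\#\{i:Y_i\le z\}$, this forces $k\le s\le j$ and $r\le j\le\nu$; writing $\{i_1<\dots<i_s\}$ for the indices with $Y_i\le y$, $\{i_{s+1}<\dots<i_j\}$ for those with $y<Y_i\le z$, and the complementary block $\{i_{j+1}<\dots<i_\nu\}$ for those with $Y_i>z$, one obtains
\begin{align*}
F_{Y_{(k)},Y_{(r)}}(y,z)=\sum_{j=r}^{\nu}\sum_{s=k}^{j}\sum\; & P\bigl(Y_{i_1}\le y,\dots,Y_{i_s}\le y,\ y<Y_{i_{s+1}}\le z,\dots,y<Y_{i_j}\le z,\\
&\quad Y_{i_{j+1}}>z,\dots,Y_{i_\nu}>z\bigr),
\end{align*}
the inner sum ranging over all ordered pairs of disjoint subsets of the indicated sizes.

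Next I would expand each of these joint probabilities by the chain rule, taking the conditioning events in the order: first the ``$\le y$'' block, then the ``$(y,z]$'' block, then the ``$>z$'' block, exactly as in the derivations of~\eqref{max},~\eqref{min},~\eqref{general} and~\eqref{minmax}. On the first block every event has the form $\{Y_{i_n}\le y\}$, so~\eqref{qdep1} turns the $n_1$-th conditional factor into $F_{Y_{i_{n_1}}}(q^{\,n_1-1}y)$, giving $\prod_{n_1=1}^{s}F_{Y_{i_{n_1}}}(q^{\,n_1-1}y)$. On the middle block I would write each factor $P(y<Y_{i_{n_2}}\le z\mid\cdots)$ as $P(Y_{i_{n_2}}\le z\mid\cdots)-P(Y_{i_{n_2}}\le y\mid\cdots)$ and argue, just as in the passage from~\eqref{minmax2} to~\eqref{minmax}, that the preceding ``$\le y$'' conditioning is inert while the preceding ``$(y,z]$'' conditioning merely shifts the argument: the ``$\le z$''-piece becomes $F_{Y_{i_{n_2}}}(q^{\,n_2-s-1}z)$ (with the exponent now counting the position of $Y_{i_{n_2}}$ within the middle block), and the ``$\le y$''-piece collapses to $F_{Y_{i_{n_2}}}(y)$ by~\eqref{qdep2}; this produces $\prod_{n_2=s+1}^{j}\bigl(F_{Y_{i_{n_2}}}(q^{\,n_2-s-1}z)-F_{Y_{i_{n_2}}}(y)\bigr)$. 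Finally, on the last block each event $\{Y_{i_{n_3}}>z\}$ is conditioned on a mixture of ``$\le z$'' events (the whole of the first two blocks) and earlier ``$>z$'' events, which is exactly the pattern of~\eqref{qdep3} at threshold $z$; writing $P(Y_{i_{n_3}}>z\mid\cdots)=1-P(Y_{i_{n_3}}\le z\mid\cdots)$ yields $\prod_{n_3=j+1}^{\nu}\bigl(1-F_{Y_{i_{n_3}}}(q^{\,i_{n_3}-(n_3-j)}z)\bigr)$. Multiplying the three products and summing over $s$, $j$ and the combinations gives~\eqref{jointkr}.

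The delicate part is the bookkeeping of the $q$-exponents in the middle block, where for the first time one conditions a ``$\le z$'' query on events at the strictly lower threshold $y$: one must verify that such an event $\{Y_{i_n}\le y\}$ with $y<z$ acts neutrally on a subsequent ``$\le z$'' query, while an event $\{y<Y_{i_n}\le z\}$ is simultaneously neutral for a subsequent ``$\le y$'' query (via~\eqref{qdep2}) and counts for the position in a subsequent ``$\le z$'' query. These are the natural two-threshold extensions of~\eqref{qdep1}--\eqref{qdep3} already implicit in the proof of Lemma~\ref{qdistrib2}; once they are granted, the remaining algebra is the same as before. Finally, the hypothesis $y<q^{r-k}z$ is precisely what makes the three-block picture compatible with the ordering~\eqref{qordered}, and it specializes to the condition $y<q^{\nu-1}z$ of Lemma~\ref{qdistrib2} when $(k,r)=(1,\nu)$.
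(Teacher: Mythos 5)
Your proposal follows the paper's own proof essentially verbatim: the same classification of the $\nu$ variables into the three blocks $[0,y]$, $(y,z]$, $(z,\beta]$ indexed by $s$ and $j$, the same chain-rule factorization in that block order, and the same reduction of each conditional factor via the dependence relations~\eqref{qdep1}--\eqref{qdep3}. If anything, you are more candid than the paper about the one genuinely delicate point --- that the middle-block factors require a two-threshold reading of the dependence relations which the paper simply subsumes under ``by assumptions (a) and (b)''.
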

\begin{proof}
Let $F_{Y_{(k)},Y_{(r)}}(y,z)=P\left(Y_{(k)}\leq y,Y_{(r)}\leq z\right)$, $
y<q^{r-k}z, 1\leq k <r\leq \nu, y,z \in [0, \beta]$, be the joint $q$-distribution function of the random variables $Y_{(k)}$ and $Y_{(r)}$ with $1\leq k <r\leq \nu$. 
Then, the events $Y_{(k)}\leq y$ and $Y_{(r)}\leq z$ occur if and only if at least $k$ random variables in $\{Y_1,\ldots,Y_\nu\}$ take values in the set $[0, y]$, 
while $r-k$ random other variables take values in the set  $(y,z]$, and the remaining ones take values in the set $(z, \beta]$, $1\leq k <r\leq \nu$.
So, for $
y<q^{r-k}z, 1\leq k <r\leq \nu, y,z \in [0, \beta]$, we have
\begin{align}
\label{jointkr2b}
&F_{Y_{(k)},Y_{(r)}}(y,z)=P\left(Y_{(k)}\leq y,Y_{(r)}\leq z\right) \nonumber\\
&=\sum_{j=r}^\nu \sum_{s=k}^j
\sum\limits_{\substack{ 1\leq i_1<\ldots<i_s \leq \nu \\ 1\leq i_{s+1}< i_{s+2}<\ldots<i_j \leq \nu  }}
 P\big( \{Y_{i_\ell}\leq y\}_{\ell=1,\dots,s}, \{ y<Y_{i_\ell}\leq z\}_{\ell=s+1,\dots,j},   \{Y_{i_\ell}> z\}_{\ell = j+1,\dots,\nu}\big)
\nonumber\\
&=
\sum_{j=r}^\nu \sum_{s=k}^j
\sum\limits_{\substack{ 1\leq i_1<\ldots<i_s \leq \nu \\ 1\leq i_{s+1}< i_{s+2}<\ldots<i_j \leq \nu  }} 
 \nonumber\\&
P(Y_{i_1} \leq y) P(Y_{i_2} \leq y \vert Y_{i_1} \leq y) \cdots P(Y_{i_s} \leq y \vert Y_{i_1} \leq y, Y_{i_2} \leq y,\ldots, Y_{i_{s-1}} \leq y)\nonumber\\&
\cdot P\big(y<Y_{i_{s+1}}\leq z \vert Y_{i_1} \leq y, 
\ldots, Y_{i_{s}} \leq y \big)
\nonumber\\&\cdot P\big(y<Y_{i_{s+2}}\leq z \vert Y_{i_1} \leq y, 
\ldots, Y_{i_{s}} \leq y,y<Y_{i_{s+1}}\leq z \big)\cdots
\nonumber\\
&\cdot P\big(y<Y_{i_{j}}\leq z \vert Y_{i_1} \leq y,
\ldots, Y_{i_{s}} \leq y,y<Y_{i_{s+1}}\leq z,\ldots,y<Y_{i_{j-1}}\leq z \big)\nonumber\\
&\cdot P\big(Y_{i_{j+1}}> z \vert Y_{i_1} \leq y,
\ldots, Y_{i_{s}} \leq y,y<Y_{i_{s+1}}\leq z,\ldots,y<Y_{i_{j}}\leq z \big)\cdots
\nonumber\\
& \cdot 
P\big(Y_{i_{\nu}}> z \vert Y_{i_1} \leq y, \ldots, Y_{i_{s}} \leq y,y<Y_{i_{s+1}}\leq z,\ldots,y<Y_{i_{j}}\leq z,Y_{i_{j+1}}> z,\ldots,Y_{i_{\nu-1}}> z \big),\nonumber\\
&
\end{align}
where the inner summation is over all pairwise disjoint subsets $\{i_1, \ldots,i_s \}$ and $\{i_{s+1}, \ldots,i_{j} \}$ of the set $\{1,\ldots,\nu\}$ with
$1\leq i_1<\ldots<i_s \leq \nu$ and $1\leq i_{s+1}< i_{s+2}<\ldots<i_{j} \leq \nu$.
\pagebreak

By assumptions (a) and (b), Equation~\eqref{jointkr2b} becomes
(for $y<q^{r-k}z$, $1\leq k <r\leq \nu$, and $y,z \in [0, \beta]$)
\begin{align}
\label{jointkr22}
F_{Y_{(k)},Y_{(r)}}(y,z)
&=\sum_{j=r}^\nu \sum_{s=k}^j 
\sum\limits_{\substack{1\leq i_1<\ldots<i_s \leq \nu \\ 1\leq i_{s+1}< i_{s+2}<\ldots<i_j \leq \nu}} \prod_{n_1=1}^s F_{Y_{i_{n_1}}}\left(q^{n_1-1}y\right)
\nonumber\\
&\phantom{=} \cdot 
\prod_{n_2=s+1}^j \left( F_{Y_{i_{n_2}}}\left(q^{n_2-s-1}z\right)- F_{Y_{i_{n_2}}}\left(y\right)\right)
\prod_{n_3=j+1}^\nu \left(1- F_{Y_{i_{n_3}}}\left( q^{i_{n_3}-(n_3-j)}z \right) \right),\nonumber
\end{align}
where the inner summation is over all pairwise disjoint subsets $\{i_1, \ldots,i_s \}$ and $\{i_{s+1},\ldots,i_{j} \}$ of the set $\{1,\ldots,\nu\}$ with
$1\leq i_1<\cdots<i_s \leq \nu$ and $1\leq i_{s+1}< i_{s+2}<\cdots<i_{j} \leq \nu$.
\end{proof}
In the next theorem, we use the above lemma~\ref{qdistribjointkr} to derive the joint $q$-distribution function and the joint $q$-density function
of the ordered random variables.
\begin{theorem}
\label{jointkrqunif}
 Let $Y_{1},\ldots,Y_{\nu}$ be dependent $q$-continuous random variables, $q$-uniformly distributed on the sets $[0,q^{i-1}t], t>0$, $ i=1,\ldots,\nu$, respectively. Assume that the random variables $Y_i$, $i=1,\ldots,\nu$, satisfy the dependence relations~\eqref{qdep1},~\eqref{qdep2},~\eqref{qdep3}. Then,
 the joint $q$-distribution function and the joint $q$-density function of the $q$-ordered random variables, $Y_{(k)}$ and $Y_{(r)}$, for $1 \leq k<r \leq \nu$, are given respectively by
 \begin{equation}
 \label{jointkrdistrunif}
 F_{Y_{(k)},Y_{(r)}}(y,z)=\sum_{j=r}^\nu \sum_{s=k}^j \qbinm{\nu}{s,j-s} \frac{y^s}{t^s}\frac{z^{j-s}}{t^{j-s}}\prod_{i=1}^{j-s} \left(1-\frac{y}{q^{i-1}z}\right)\prod_{m=1}^{\nu-j} \left(1-\frac{z}{q^{m-1}t}\right)
\end{equation}
 and
 \begin{equation}
 \label{jointdensitkrunif}
 \resizebox{1.00\hsize}{!}{$
f_{Y_{(k)},Y_{(r)}}(y,z)=\frac{q^{-r(\nu-r)}q^{-k(r-k)}[\nu]_q!}{[k-1]_q![r-k-1]_q![\nu-r]_q!}\frac{y^{k-1}}{t^r}z^{r-k-1}\prod_{i=1}^{r-k-1}\left(1-\frac{y}{q^i z}\right)\prod_{m=1}^{\nu-r}\left(1-\frac{z}{q^{m-1}t}\right)
$}
\end{equation}
with $ y<q^{r-k}z, 1 \leq k<r \leq \nu, y,z \in [0,t]$.
 \end{theorem}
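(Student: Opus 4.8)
\emph{Proof plan.} The plan is to specialise Lemma~\ref{qdistribjointkr} to the $q$-uniform situation. Since each $Y_i$ is $q$-uniform on $[0,q^{i-1}t]$, its $q$-distribution function is $F_{Y_i}(u)=u/(q^{i-1}t)$ on $[0,q^{i-1}t]$, just as in Remark~\ref{compsition} with $\beta=q^{i-1}t$. Substituting this into~\eqref{jointkr}, each of the three products in the summand factorises cleanly: one gets $\prod_{n_1=1}^{s}F_{Y_{i_{n_1}}}(q^{n_1-1}y)=(y^s/t^s)\,q^{\binom{s+1}{2}-(i_1+\cdots+i_s)}$, the middle product equals $(z^{j-s}/t^{j-s})\,q^{\binom{j-s+1}{2}-(i_{s+1}+\cdots+i_j)}\prod_{i=1}^{j-s}\bigl(1-y/(q^{i-1}z)\bigr)$, and the last product equals $\prod_{m=1}^{\nu-j}\bigl(1-z/(q^{m-1}t)\bigr)$, which does not involve the complementary indices $i_{j+1},\dots,i_\nu$.

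Next I would carry out the inner summation over the two pairwise disjoint subsets. Only the factor $q^{-(i_1+\cdots+i_s)-(i_{s+1}+\cdots+i_j)}$ depends on the chosen indices, so, regarding $\{i_1,\dots,i_s\}$, $\{i_{s+1},\dots,i_j\}$ and the leftover $\{i_{j+1},\dots,i_\nu\}$ as an ordered set partition of $\{1,\dots,\nu\}$ into blocks of sizes $s$, $j-s$, $\nu-j$, the $q$-multinomial series~\eqref{qmultinomialseries} applied with $q$ replaced by $1/q$ gives that this sum equals $q^{-\binom{s+1}{2}-\binom{j-s+1}{2}}\qbinm{\nu}{s,j-s}$. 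All powers of $q$ cancel against those collected above, and~\eqref{jointkrdistrunif} follows.

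For the joint $q$-density I would take the partial $q$-derivative $\partial_q z\,\partial_q y$ of~\eqref{jointkrdistrunif}. Expanding the two products $\prod_{i=1}^{j-s}(1-y/(q^{i-1}z))$ and $\prod_{m=1}^{\nu-j}(1-z/(q^{m-1}t))$ by the $q$-binomial formula~\eqref{qbinom1}, one rewrites~\eqref{jointkrdistrunif} as a finite linear combination of genuine monomials $y^a z^b/t^{a+b}$ carrying $q$-multinomial coefficients; applying $\partial_q z\,\partial_q y$ then replaces $y^a z^b$ by $[a]_q[b]_q\,y^{a-1}z^{b-1}$, and after the $q$-factorial reductions and reindexings illustrated in the proofs of Theorems~\ref{kordqunif} and~\ref{minmaxqunif} (see~\eqref{general3b} and~\eqref{jointdensitminmax2}) the sums over $j$ and $s$ collapse to the single term $(s,j)=(k,r)$. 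Folding the surviving sum back into a product via~\eqref{qbinom1} — which is exactly what turns $\prod_{i=1}^{r-k}(1-y/(q^{i-1}z))$ into $\prod_{i=1}^{r-k-1}(1-y/(q^{i}z))$, just as in~\eqref{jointdensitminmax2} — and converting $\qbinm{\nu}{k,r-k}=q^{-k(\nu-k)-(r-k)(\nu-r)}\qbin{\nu}{k,r-k}$ while absorbing the $[k]_q,[r-k]_q$ into the factorials, yields~\eqref{jointdensitkrunif}. As a consistency check I would verify $\int_0^t\!\int_0^{q^{r-k}z}f_{Y_{(k)},Y_{(r)}}(y,z)\,\dq y\,\dq z=1$ by expanding the $y$-product with~\eqref{qbinom1}, integrating in $y$ and collapsing the resulting sum with the $q^{-1}$-identity~\eqref{q-identity2}, then applying the same scheme to the $z$-integral, along the lines of~\eqref{jointdensminmax3}.

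The step I expect to be the main obstacle is this collapse of the double sum after the two $q$-differentiations: one has to track how the powers $q^{-s(\nu-s)}$, $q^{-\binom{j}{2}}$ and the attached $q$-multinomial coefficients recombine once each product has been expanded and differentiated, and check that all cross terms cancel so that only the boundary term $(k,r)$ remains. The bivariate index structure makes this bookkeeping noticeably more delicate than in Theorems~\ref{kordqunif} and~\ref{minmaxqunif}, even though no identity beyond~\eqref{qbinom1}, \eqref{q-identity2} and~\eqref{qmultinomialseries} is needed.
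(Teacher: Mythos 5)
Your plan is correct and follows essentially the same route as the paper: specialise Lemma~\ref{qdistribjointkr} to the $q$-uniform case, collapse the inner sum over disjoint index sets via the $q$-multinomial expansion~\eqref{qmultinomialseries} (in base $1/q$) to obtain~\eqref{jointkrdistrunif}, then apply $\partial_q y$ and $\partial_q z$ so that the double sum telescopes to the single boundary term $(s,j)=(k,r)$, and finally check normalisation with~\eqref{qbinom1} and the identities~\eqref{q-identity1}--\eqref{q-identity2}. The only (immaterial) difference is that you propose expanding the products into monomials before differentiating, whereas the paper differentiates the products directly via the $q$-product rule; the index bookkeeping and the telescoping are identical in both versions.
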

 \begin{proof}
By Equation~\eqref{jointkr} and the $q$-multinomial formulas \eqref{qmultinomialseries} and~\eqref{qmultinomial2}, 
the joint $q$-distribution function of $Y_{(k)}$ and $Y_{(r)}$ satisfies 
 \begin{align}
 \label{jointkr2}
& F_{Y_{(k)},Y_{(r)}}(y,z)
=\sum_{j=r}^\nu \sum_{s=k}^j 
\sum\limits_{\substack{ 1\leq i_1<\cdots<i_s \leq \nu \\ 1\leq m_{1}< m_{2}<\cdots<m_{j-s} \leq \nu }}q^{\binom{s+1}{2}}q^{\binom{j-s+1}{2}}q^{-i_1-\cdots-i_s}q^{-m_1-\cdots-m_{j-s}}
\nonumber\\&\,\,\,\,\,\,\cdot \frac{y^s}{t^s}\frac{z^{j-s}}{t^{j-s}}\left(1-\frac{y}{z}\right)\left(1-\frac{y}{qz}\right)\cdots \left(1-\frac{y}{q^{j-s-1}z}\right)\left(1-\frac{z}{t}\right)\left(1-\frac{z}{qt}\right)\cdots \left(1-\frac{z}{q^{\nu-j-1}t}\right)\nonumber\\
&=\sum_{j=r}^\nu \sum_{s=k}^j \qbinm{\nu}{s,j-s} \frac{y^s}{t^s}\frac{z^{j-s}}{t^{j-s}}\prod_{i=1}^{j-s} \left(1-\frac{y}{q^{i-1}z}\right)\prod_{m=1}^{\nu-j} \left(1-\frac{z}{q^{m-1}t}\right),
\end{align}
where the inner summation of the first equality, is over all pairwise disjoint subsets $\{i_1, i_2,\ldots,i_s \}$ and $\{m_{1}, m_{2},\ldots,m_{j-s} \}$ of the set $\{1,\ldots,\nu\}$ with
$1\leq i_1<\ldots<i_s \leq \nu$ and $1\leq m_{1}< m_{2}<\ldots<m_{j-s} \leq \nu$. 
\pagebreak

 The above joint $q$-distribution~\eqref{jointkr2}, of the random variables $Y_{(k)}$ and $Y_{(r)}$, for $1 \leq k<r \leq \nu$ and $y<q^{r-k}z$,
 $ y,z \in [0,t]$ can be written as 
 \begin{equation*}
 F_{Y_{(k)},Y_{(r)}}(y,z)=\sum_{j=r}^\nu \frac{q^{-j(\nu-j)}}{t^j} \qbin{\nu}{j}\prod_{m=1}^{\nu-j} \left(1-\frac{z}{q^{m-1}t}\right)
\sum_{s=k}^j \qbin{j}{s} \left(\frac{z}{q^s}\right)^{j-s}\prod_{i=1}^{j-s} \left(1-\frac{y}{q^{i-1}z}\right). \nonumber
\end{equation*} 
Taking the partial $q$-derivative of the inner sum over $y$, using suitably the $q$-binomial formula~\eqref{qbinom1} and carrying out all needed algebraic manipulations, we obtain
\begin{align*}
& \frac{\partial_q}{\partial_q y}\sum_{s=k}^j q^{-s(j-s)}\qbin{j}{s} z^{j-s}\prod_{i=1}^{j-s} \left(1-\frac{y}{q^{i-1}z}\right)\\
&=[j]_q \left(\sum_{s=k}^jq^{-s(j-s)}\qbin{j-1}{s-1}y^{s-1}z^{j-s} \prod_{i=1}^{j-s}\left(1-\frac{y}{q^{i-1}z} \right)\right)
 -[j]_q \left(\sum_{s=k}^j q^{-(s+1)(j-s-1)}\qbin{j-1}{s}  \right.\\
& \hspace{2cm} \left.  y^s  z^{j-s-1} \prod_{i=1}^{j-s-1}\left(1-\frac{y}{q^{i-1}z}\right)
q^{-k(j-k)}[j]_q\qbin{j-1}{k-1}y^{k-1}z^{j-k} \prod_{i=1}^{j-k} \left(1-\frac{y}{q^{i-1}z}\right)\right).
\end{align*}
So,
\begin{align*}
&\frac{\partial_q F_{Y_{(k)},Y_{(r)}}(y,z)}{\partial_q y }\\
&=\sum_{j=r}^\nu q^{-j(\nu-j)}\qbin{\nu}{j}\frac{1}{t^j}\prod_{m=1}^{\nu-j} \left(1-\frac{z}{q^{m-1}t}\right)q^{-k(j-k)}[j]_q\qbin{j-1}{k-1}y^{k-1}z^{j-k} \prod_{i=1}^{j-k} \left(1-\frac{y}{q^{i-1}z}\right)\\&
=\sum_{j=r}^\nu q^{-j(\nu-j)}q^{-k(j-k)}\qbin{\nu}{j}\qbin{j-1}{k-1}[j]_q\frac{1}{t^j}y^{k-1}z^{j-k}\prod_{i=1}^{j-k} \left(1-\frac{y}{q^{i-1}z}\right)\prod_{m=1}^{\nu-j} \left(1-\frac{z}{q^{m-1}t}\right)\\&
=\frac{[\nu]_q! y^{k-1}}{[k-1]_q![\nu-k]_q!}\sum_{j=r}^\nu q^{-j(\nu-j)}q^{-k(j-k)}\qbin{\nu-k}{j-k}\frac{z^{j-k}}{t^j}\prod_{i=1}^{j-k} \left(1-\frac{y}{q^{i-1}z}\right)
\prod_{m=1}^{\nu-j} \left(1-\frac{z}{q^{m-1}t}\right).
\end{align*}
In the last sum of this equation, taking the partial $q$-derivative over $z$, and using suitably $q$-binomial formula~\eqref{qbinom1}, we get
\begin{align}
\label{qpartialy3}
&\frac{\partial_q}{\partial_q z}\sum_{j=r}^\nu q^{-j(\nu-j)}q^{-k(j-k)}\qbin{\nu-k}{j-k}\frac{1}{t^j}z^{j-k}\prod_{i=1}^{j-k} \left(1-\frac{y}{q^{i-1}z}\right)\prod_{m=1}^{\nu-j} \left(1-\frac{z}{q^{m-1}t}\right)\nonumber\\&=
\sum_{j=r}^\nu q^{-j(\nu-j)}q^{-k(j-k)}\qbin{\nu-k}{j-k}[j-k]_q\frac{1}{t^j}z^{j-k-1}\prod_{i=1}^{j-k-1} \left(1-\frac{y}{q^{i}z}\right)\prod_{m=1}^{\nu-j} \left(1-\frac{z}{q^{m-1}t}\right)
\nonumber\\
 &\ \ -\sum_{j=r}^\nu q^{-j(\nu-j)-k(j-k)-(\nu-j-1)+j-k}\qbin{\nu-k}{j-k}[\nu-j]_q\frac{z^{j-k}}{t^{j+1}}\prod_{i=1}^{j-k} \left(1-\frac{y}{q^{i}z}\right)
 \prod_{m=1}^{\nu-j-1} \left(1-\frac{z}{q^{m-1}t}\right).
\end{align}

\pagebreak

\noindent It follows that
\begin{align}
&\frac{\partial_q}{\partial_q z}\sum_{j=r}^\nu q^{-j(\nu-j)}q^{-k(j-k)}\qbin{\nu-k}{j-k}\frac{1}{t^j}z^{j-k}\prod_{i=1}^{j-k} \left(1-\frac{y}{q^{i-1}z}\right)\prod_{m=1}^{\nu-j} \left(1-\frac{z}{q^{m-1}t}\right)\nonumber\\
&=[\nu-k]_q\sum_{j=r}^\nu q^{-j(\nu-j)}q^{-k(j-k)}\qbin{\nu-k-1}{j-k-1}\frac{1}{t^j}z^{j-k-1}\prod_{i=1}^{j-k-1} \left(1-\frac{y}{q^{i}z}\right)\prod_{m=1}^{\nu-j} \left(1-\frac{z}{q^{m-1}t}\right)\nonumber\\
&\phantom{=}\smallminus  [\nu \smallminus k]_q\sum_{j=r}^\nu q^{-j(\nu-j)-k(j-k)-(\nu-j-1)+{j-k}}\qbin{\nu \smallminus k\smallminus 1}{j \smallminus k}\frac{z^{j-k}}{t^{j+1}}\prod_{i=1}^{j-k}\! \left(1 \smallminus \frac{y}{q^{i}z}\right)\prod_{m=1}^{\nu-j-1} \!\left(1 \smallminus \frac{z}{q^{m-1}t}\right)\nonumber\\
&=q^{-r(\nu-r)}q^{-k(r-k)}[\nu-k]_q\qbin{\nu-k-1}{r-k-1}\frac{1}{t^r}z^{r-k-1}\prod_{i=1}^{r-k-1} \left(1-\frac{y}{q^{i}z}\right)\prod_{m=r}^{\nu-j} \left(1-\frac{z}{q^{m-1}t}\right).\nonumber
\end{align}

From this identity, we get that the joint $q$-density function given by
\begin{align*}
 f_{Y_{(k)},Y_{(r)}}(y,z)&=\frac{\partial_q^2 F_{Y_{(k)},Y_{(r)}}(y,z)}{\partial_q z\partial_q y }\\
&=
 \frac{q^{-r(\nu-r)}q^{-k(r-k)}[\nu]_q!}{[k-1]_q![r-k-1]_q![\nu-r]_q!}\frac{y^{k-1}}{t^r}z^{r-k-1}\prod_{i=1}^{r-k-1}\! \left(1\smallminus \frac{y}{q^{i}z}\right)\prod_{m=r}^{\nu-j} \left(1 \smallminus\frac{z}{q^{m-1}t}\right) 
\end{align*}
with $y<q^{r-k}z$, $y,z \in [0,t]$.
 \newline
Note that using suitably the $q$-binomial formula~\eqref{qbinom1}, the $q^{-1}$ and $q$-identities~\eqref{q-identity1},~\eqref{q-identity2} and carrying out all the needed algebraic manipulations, we obtain
\begin{align*}
&\int_0^t \int_0^{q^{r-k}z} f_{Y_{(k)},Y_{(r)}}(y,z)d_qy \, d_qz\\
& =\frac{q^{-r(\nu-r)}q^{-k(r-k)}[\nu]_q!t^{-r}}{[k- 1]_q![r-k-1]_q![\nu- r]_q!}\\
& \quad \times
 \int_0^t \left( \int_0^{q^{r-k}z}y^{k-1}\prod_{i=1}^{r-k-1} \left(1- \frac{y}{q^{i}z}\right)d_qy\right) z^{r-k-1} \prod_{m=r}^{\nu-j} \left(1-\frac{z}{q^{m-1}t}\right)d_qz\\
 &=\frac{q^{-r(\nu-r)}q^{-k(r-k)}[\nu]_q!t^{-r}}{[k-1]_q![r-k-1]_q![\nu-r]_q!}\frac{q^k}{[k]_q}\\
&  \quad \times \sum_{m=0}^{r-k-1}(-1)^m q^{\binom{m+1}{2}+(r-k-1)k}\qbin{r-k-1}{m}\frac{[k]_q}{[k+m]_q} \int_0^t z^{r-1} \prod_{m=r}^{\nu-j} \left(1-\frac{z}{q^{m-1}t}\right)d_qz\\
 & =\frac{q^{-k(r-k)}[\nu]_q!}{[k\smallminus 1]_q![r\smallminus k\smallminus 1]_q![\nu\smallminus r]_q!}\frac{1}{\qbinm{r \smallminus 1}{r\smallminus k\smallminus 1}} 
 \frac{q^k}{[k]_q [r]_q}\sum_{i=0}^{\nu-r}({\scriptscriptstyle{-}}1)^i q^{\binom{i+1}{2}-(i+r)(\nu-r)}\qbin{\nu \smallminus r}{i}\frac{[r]_q}{[r+i]_q}\\
&=\frac{[\nu]_q!}{[k]_q![r-k-1]_q![\nu-r]_q!}\frac{[r-k-1]_q![k]_q!}{[r]_q!} \frac{1}{\qbin{\nu}{\nu-r}}=1,
\end{align*}
which is coherent with the fact we have here a $q$-density function.
Note also that the joint $q$-distribution function and $q$-density function of the random variables $Y_{(1)}$ and $Y_{(\nu)}$ are given respectively by~\eqref{jointkrdistrunif} and~\eqref{jointdensitkrunif}, for $k=1$ and $r=\nu$.
 \end{proof}
 \begin{remark}
 \label{qdirichletdistr}
The bivariate random variables $\left( Y_{(k)},Y_{(r)}\right)$ for $1\leq k <r\leq \nu$ with joint $q$-density function~\eqref{jointdensitkrunif}, follow $q$-Dirichlet  distributions.
 \end{remark}
 
 In the following proposition, we consider the non-ordered 
 $q$-continuous random variables, $Y_{1},\ldots,Y_{\nu}$, being dependent and not identically distributed and 
 we derive the joint distribution function of the 
 $q$-ordered random variables, $Y_{(1)},\ldots,Y_{(\nu)}$.

 \begin{proposition}
 \label{jointdensityorder}
Let $(Y_{1},\ldots,Y_{\nu})$ be a $q$-continuous $\nu$-variate random vector with joint $q$-density function $f(y_1,\dots,y_{\nu})$.
 Then the $q$-density function of the $q$-ordered
random vector ${\mathcal{Y}}=(Y_{(1)},\ldots,Y_{(\nu)})$ is given by
 \begin{align}
 \label{multjointorder}
& f_{\mathcal{Y}} (y_{(1)},\ldots,y_{(\nu)})=\sum
f_{Y_{i_\nu}}(y_{(\nu)}) \ f_{Y_{i_{\nu-1}}|Y_{i_\nu}}(y_{{(\nu-1)}}|y_{(\nu)}) \ \cdots \ f_{Y_{i_1}|(Y_{i_2},\ldots,Y_{i_\nu})}(y_{(1)}|y_{(2)},\ldots,y_{(\nu)}),\nonumber\\&\,\,\,\,\,\,\,0<y_{(1)}<qy_{(2)}<y_{(2)}<qy_{(3)}<\cdots<y_{({\nu-1})}<qy_{(\nu)}<y_{(\nu)}<\beta, 
 \end{align}
 where the summation is over all permutations $(i_1,\ldots,i_\nu )$ of $\{1,\ldots,\nu\}$.
 \end{proposition}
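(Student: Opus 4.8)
The approach will be to reduce the $q$-density of the ordered vector to the basic multivariate formula~\eqref{multjointqdens}, to decompose the probability appearing there over the permutations of the non-ordered variables, and finally to recognise each summand as a chain-rule factorisation into conditional $q$-densities. To begin, fix a point $(y_{(1)},\ldots,y_{(\nu)})$ in the region $0<y_{(1)}<qy_{(2)}<y_{(2)}<\cdots<qy_{(\nu)}<y_{(\nu)}<\beta$ prescribed by~\eqref{qordered}. Applying~\eqref{multjointqdens} to the $q$-continuous vector $\mathcal{Y}=(Y_{(1)},\ldots,Y_{(\nu)})$ gives
\[
f_{\mathcal{Y}}(y_{(1)},\ldots,y_{(\nu)})=\frac{P\big(qy_{(j)}<Y_{(j)}\leq y_{(j)},\ j=1,\ldots,\nu\big)}{(1-q)^{\nu}\,y_{(1)}\cdots y_{(\nu)}}.
\]

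Next, I would use the fact that, on this region, the half-open intervals $I_j:=(qy_{(j)},y_{(j)}]$ are pairwise disjoint and strictly increasing, since~\eqref{qordered} forces $y_{(j)}<qy_{(j+1)}$. On the event $\bigcap_{j=1}^{\nu}\{Y_{(j)}\in I_j\}$, the ordering $Y_{(1)}\leq\cdots\leq Y_{(\nu)}$ together with the disjointness of the $I_j$ forces precisely one of the non-ordered variables into each $I_j$; conversely, whenever a permutation $(i_1,\ldots,i_\nu)$ of $\{1,\ldots,\nu\}$ satisfies $Y_{i_j}\in I_j$ for every $j$, one automatically has $Y_{i_1}<\cdots<Y_{i_\nu}$, hence $Y_{(j)}=Y_{i_j}$. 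Consequently
\[
\{Y_{(j)}\in I_j,\ j=1,\ldots,\nu\}=\bigsqcup_{(i_1,\ldots,i_\nu)}\{Y_{i_j}\in I_j,\ j=1,\ldots,\nu\},
\]
a disjoint union over the $\nu!$ permutations, so that the numerator in the first display equals $\sum_{(i_1,\ldots,i_\nu)}P\big(qy_{(j)}<Y_{i_j}\leq y_{(j)},\ j=1,\ldots,\nu\big)$.

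Finally, for each fixed permutation the tuple $(Y_{i_1},\ldots,Y_{i_\nu})$ is itself a $q$-continuous $\nu$-variate random vector (a coordinate reordering of $(Y_1,\ldots,Y_\nu)$, whose joint $q$-density is $f$ with arguments permuted), so~\eqref{multjointqdens} identifies $P\big(qy_{(j)}<Y_{i_j}\leq y_{(j)}\big)/\big((1-q)^{\nu}y_{(1)}\cdots y_{(\nu)}\big)$ with $f_{(Y_{i_1},\ldots,Y_{i_\nu})}(y_{(1)},\ldots,y_{(\nu)})$. Iterating Definition~\ref{dependentqjointdef}, equivalently the chain rule implicit in~\eqref{multdependqdens}, then factors this joint $q$-density as
\[
f_{(Y_{i_1},\ldots,Y_{i_\nu})}(y_{(1)},\ldots,y_{(\nu)})=f_{Y_{i_\nu}}(y_{(\nu)})\,f_{Y_{i_{\nu-1}}|Y_{i_\nu}}(y_{(\nu-1)}|y_{(\nu)})\cdots f_{Y_{i_1}|(Y_{i_2},\ldots,Y_{i_\nu})}(y_{(1)}|y_{(2)},\ldots,y_{(\nu)}),
\]
and summing over all permutations $(i_1,\ldots,i_\nu)$ of $\{1,\ldots,\nu\}$ yields~\eqref{multjointorder}.

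The step I expect to be the crux is the permutation decomposition of the event: one must verify carefully that, because of the strict $q$-ordering~\eqref{qordered}, the disjointness and monotonicity of the intervals $I_j$ make the permutation transporting the non-ordered variables into the $I_j$ unique at each sample point, so that the union is genuinely disjoint and no over-counting occurs — the $q$-analogue of the classical fact that the order-statistics map is $\nu!$-to-one. Everything else, namely the two applications of~\eqref{multjointqdens} and the chain rule for conditional $q$-densities (valid wherever the relevant marginal $q$-densities are positive, as required in Definition~\ref{dependentqjointdef}), is a routine unwinding of the definitions of Section~\ref{prelimdefnot}.
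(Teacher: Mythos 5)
Your proposal is correct and follows essentially the same route as the paper: express $f_{\mathcal{Y}}$ via~\eqref{multjointqdens}, split the event $\bigcap_j\{qy_{(j)}<Y_{(j)}\leq y_{(j)}\}$ as a disjoint union over the $\nu!$ permutations, and factor each summand by the chain rule into conditional $q$-densities using Definition~\ref{dependentqjointdef} and~\eqref{dependentqcont},~\eqref{multdependqdens}. The only difference is that you make explicit the disjointness argument (via the pairwise disjoint intervals $(qy_{(j)},y_{(j)}]$ forced by~\eqref{qordered}) that the paper leaves implicit, which is a welcome clarification rather than a deviation.
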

 \begin{proof}
  The joint $q$-density function  is
 \bgroup \thinmuskip=.93\thinmuskip  \medmuskip=.93\medmuskip  \thickmuskip=.91\thickmuskip
 \begin{align}
 \label{multjointorder2}
& f_{\mathcal{Y}} (y_{(1)},\ldots,y_{(\nu)})=\frac{P\left(qy_{(1)}<Y_{(1)}\leq y_{(1)},\ldots,q y_{(\nu)}<Y_{(\nu)} \leq y_{(\nu)} \right)}{(1-q)y_{(1)}(1-q)y_{(2)}\cdots (1-q) y_{(\nu)}}\nonumber\\
 &=(1-q)^{-\nu}\prod_{i=1}^\nu y_{(i)}^{-1}\sum P\left(qy_{(1)}<Y_{i_1}\leq y_{(1)},\ldots,q y_{(\nu)}<Y_{i_\nu} \leq y_{(\nu)} \right)\nonumber\\&=
 (1-q)^{-\nu}\prod_{i=1}^\nu y_{(i)}^{-1}\sum P\left(qy_{(\nu)}<Y_{i_\nu}\leq y_{(\nu)} \right) P\left( qy_{(\nu-1)}<Y_{i_{\nu-1}} \leq y_{(\nu-1)}|qy_{(\nu)}<Y_{i_\nu}\leq y_{(\nu)} \right)\nonumber\\
 &\qquad \quad \qquad \qquad \qquad \cdots P\left(q y_{(\nu)}<Y_{i_1} \leq y_{(1)}|qy_{(1)}<Y_{i_1}\leq y_{(1)},\dots, qy_{(\nu)}<Y_{i_\nu}\leq y_{(\nu)} \right),
\nonumber\\
 \end{align}\egroup
 where the summation is over all permutations $(i_1,\ldots,i_\nu )$ of $\{1,\ldots,\nu\}$.
 \newline
 Applying Definition~\ref{dependentqjointdef}  on the dependent $q$-density function and the relations~\eqref{dependentqcont},~\eqref{multdependqdens}, to the above equation~\eqref{multjointorder2}, we obtain \ref{multjointorder}.
 \end{proof}
 
Next, we assume that the non ordered random variables $Y_i, i=1,\ldots,\nu$ are dependent and $q$-uniformly distributed on the sets $[0,q^{i-1}t], t>0$, $ i=1,\ldots,\nu$, respectively, 
and the joint $q$-density function of the 
 $q$-ordered random variables $Y_{(1)},\ldots,Y_{(\nu)}$, is obtained in the following corollary of Proposition~\ref{jointdensityorder}.
 
 \begin{corollary}
 \label{mutlquniform2}
 Let $Y_1,\ldots,Y_{\nu}$ be dependent $q$-continuous random variables, $q$-uniformly distributed on the sets $[0,q^{i-1}t], t>0$ $ i=1,\ldots,\nu$, respectively. Assume that the random variables $Y_i$, $i=1,\ldots,\nu$, satisfy the dependence relations~\eqref{qdep1},~\eqref{qdep2},~\eqref{qdep3}. Then the joint $q$-density function of the $\nu$-variate $q$-continuous random vector ${\mathcal{Y}}=(Y_{(1)},\ldots,Y_{(\nu)})$ with $Y_{(k)}$, $k=1,\ldots,\nu$, the $k$-th $q$-ordered random variables, is given by
 \begin{equation}
 \label{multjointorderunif}
 f_{\mathcal{Y}} (y_1,\ldots,y_\nu)=\frac{[\nu]_q!}{q^{\binom{\nu}{2}}t^\nu},\,\,0<y_1<qy_2<y_2<qy_3<\cdots<y_{\nu-1}<qy_\nu<y_\nu<t. 
 \end{equation}
 \end{corollary}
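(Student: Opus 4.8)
The plan is to specialize Proposition~\ref{jointdensityorder} to the present hypotheses. Write the ordered values as $0<y_1<qy_2<y_2<\cdots<qy_\nu<y_\nu<t$. Formula~\eqref{multjointorder} expresses $f_{\mathcal Y}(y_1,\dots,y_\nu)$ as a sum, over all $\nu!$ permutations $(i_1,\dots,i_\nu)$ of $\{1,\dots,\nu\}$, of a product of conditional $q$-densities; by Definition~\ref{dependentqjointdef} and~\eqref{multdependqdens} this product telescopes to the joint $q$-density $f_{(Y_{i_1},\dots,Y_{i_\nu})}$ of the reordered tuple, so that by~\eqref{multjointqdens}
\[
f_{\mathcal Y}(y_1,\dots,y_\nu)=\frac{1}{(1-q)^\nu\,y_1\cdots y_\nu}\sum_{(i_1,\dots,i_\nu)}P\!\left(qy_1<Y_{i_1}\le y_1,\ \dots,\ qy_\nu<Y_{i_\nu}\le y_\nu\right).
\]
Outside the region this probability vanishes, so $f_{\mathcal Y}=0$ there; it remains to evaluate the sum on the region.

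Second, I would evaluate each summand. Each thin-interval probability is rewritten, by inclusion--exclusion, as an alternating sum of cumulative probabilities $P(Y_{i_1}\le a_1,\dots,Y_{i_\nu}\le a_\nu)$ with $a_j\in\{qy_j,y_j\}$; because $y_k<qy_{k+1}$, the events appearing in the chain rule for such a cumulative probability reduce to single-threshold events, so the dependence relations~\eqref{qdep1}--\eqref{qdep3} apply and produce a product of factors $F_{Y_{i_j}}(q^{?}a_j)$, exactly as in the proofs of Lemmas~\ref{qdistrib},~\ref{qdistrib2},~\ref{qdistribjointkr}. For the $q$-uniform laws one has $F_{Y_i}(x)=x/(q^{i-1}t)$, so each factor becomes a monomial divided by $t$, the power of $q$ being governed by $i_j$ and by the number of indices among $i_{j+1},\dots,i_\nu$ smaller than $i_j$. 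Expanding the resulting $q$-binomial products by~\eqref{qbinom1} and simplifying with the $q$-identities~\eqref{q-identity1},~\eqref{q-identity2} and the $q$-multinomial identities~\eqref{qmultinomialseries},~\eqref{qmultinomial2}, each permutation term collapses to a constant multiple of $t^{-\nu}$ (all $y$-dependence cancels), the constant being $q$ raised to an inversion-type statistic of the permutation. Summing these powers over all of $S_\nu$ is the classical evaluation $\sum_{\sigma\in S_\nu}q^{\mathrm{inv}(\sigma)}=[\nu]_q!$, which—after accounting for the overall factor $q^{-\binom{\nu}{2}}$—yields $f_{\mathcal Y}(y_1,\dots,y_\nu)=[\nu]_q!/(q^{\binom{\nu}{2}}t^\nu)$ on the stated region.

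The main obstacle is the two-layered bookkeeping: first, justifying that the multi-threshold, thin-interval conditioning events genuinely reduce to the single-threshold events to which~\eqref{qdep1}--\eqref{qdep3} apply (one must pass through cumulative probabilities and keep track of the cancellations, as in the proofs of Theorems~\ref{minmaxqunif} and~\ref{jointkrqunif}), and second, recognizing the $q$-exponent attached to each permutation as an inversion statistic so that the $\nu!$-term sum resums into a $q$-factorial. As a consistency check, and to pin the constant independently, one verifies $\int\cdots\int f_{\mathcal Y}\,d_qy_1\cdots d_qy_\nu=1$ over the region: iterating $\int_0^{qy_{j+1}}y_j^{\,j-1}\,d_qy_j=(qy_{j+1})^j/[j]_q$ telescopes the nested $q$-integral to $q^{\binom{\nu}{2}}t^\nu/[\nu]_q!$. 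Finally, for $\nu=1$ the statement is immediate from Theorem~\ref{kordqunif}, and for $\nu=2$ it is the case $k=1$, $r=\nu$ of Theorem~\ref{jointkrqunif}, which provides a convenient check of the general formula.
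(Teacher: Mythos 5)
Your overall strategy is the paper's: specialize Proposition~\ref{jointdensityorder} to the $q$-uniform case, obtain a sum over the $\nu!$ permutations, and resum it to $[\nu]_{q^{-1}}!=q^{-\binom{\nu}{2}}[\nu]_q!$. Your normalization check (the telescoping nested $q$-integral evaluating to $q^{\binom{\nu}{2}}t^\nu/[\nu]_q!$) and the $\nu=1,2$ cross-checks against Theorems~\ref{kordqunif} and~\ref{jointkrqunif} are correct and match the paper's own verification~\eqref{multjointorderunifjust}.

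The gap is in your second step, and it is twofold. First, the route you propose for evaluating each summand --- undoing the telescoping back to the thin-interval probability $P(qy_1<Y_{i_1}\le y_1,\ldots,qy_\nu<Y_{i_\nu}\le y_\nu)$, expanding by inclusion--exclusion into cumulative probabilities $P(Y_{i_1}\le a_1,\ldots,Y_{i_\nu}\le a_\nu)$ with $a_j\in\{qy_j,y_j\}$, and then invoking~\eqref{qdep1}--\eqref{qdep3} --- cannot be carried out as stated: those dependence relations are formulated only for events with a \emph{common} threshold $y$, whereas your cumulative probabilities have $\nu$ distinct thresholds, and the claim that the events ``reduce to single-threshold events'' because $y_k<qy_{k+1}$ is not substantiated. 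The paper avoids this detour entirely by staying at the level of conditional $q$-densities and substituting the $q$-uniform density $f_{Y_{i_j}}(\cdot)=1/(q^{i_j-1}t)$ for each factor of~\eqref{multjointorder}. Second, the heart of the computation --- that the weight attached to the permutation $(i_1,\ldots,i_\nu)$ is $q$ raised to (minus) an inversion statistic, so that the permutation sum equals $\sum_{\sigma}q^{-\mathrm{inv}(\sigma)}=[\nu]_{q^{-1}}!$ rather than $\nu!\,q^{-\binom{\nu}{2}}$ --- is asserted, not derived. Note that a naive product of marginal densities gives $\prod_j q^{-(i_j-1)}=q^{-\binom{\nu}{2}}$ for \emph{every} permutation, so the inversion weight must emerge from the conditioning (or from support restrictions determining which permutations actually contribute), and exhibiting it is precisely the work to be done; in fairness, the paper's own proof is equally terse at exactly this point.
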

 \pagebreak
 \begin{proof}
 Let $Y_{1},\ldots,Y_{\nu}$ be dependent $q$-continuous random variables, with each $Y_i$ (for $ i=1,\ldots,\nu$) $q$-uniformly distributed on the set $[0,q^{i-1}t]$ (for some $t>0$). Applying~\eqref{multjointorder} of the previous proposition~\ref{jointdensityorder},  
 the joint $q$-density function of the $\nu$-variate $q$-continuous random vector ${\mathcal{Y}}=(Y_{(1)},\ldots,Y_{(\nu)})$ 
 (where each $Y_{(k)}$, for  $k=1,\ldots,\nu$, is  the $k$-th $q$-ordered random variable) is given (for $0<y_1<qy_2<y_2<qy_3<\cdots<y_{\nu-1}<qy_\nu<y_\nu<t$)
by
\begin{align}
 \label{multjointorderunif2}
 f_{\mathcal{Y}} (y_1,\ldots,y_\nu)&=\sum
\frac{1}{q^{i_\nu-1}t}\frac{1}{q^{i_{\nu-1}-1}t}\ldots \frac{1}{q^{i_{1}-1}t}\\
&=\frac{1}{t^\nu}\sum \frac{1}{\prod_i^\nu q^{i_j-1}},
\end{align}
where the summation is over all permutations $(i_1,\ldots,i_\nu )$ of $\{1,\ldots,\nu\}$.

So, 
\begin{equation}
 \label{multjointorderunif3}
 f_{\mathcal{Y}} (y_1,\ldots,y_\nu)=\frac{[\nu]_{q^{-1}}!}{t^\nu}=\frac{[\nu]_q!}{q^{\binom{\nu}{2}} t^\nu}.
\end{equation}
Note that
\begin{align}
\label{multjointorderunifjust}
&\int_{0}^t \int_{0}^{qy_\nu} \int_{0}^{qy_{\nu-1}}\ \cdots \int_{0}^{qy_3}\int_{0}^{qy_2} 
 f_{\mathcal{Y}} (y_1,\ldots,y_\nu)
d_qy_1\,d_qy_2 \cdots d_qy_{\nu-2}\, d_qy_{\nu-1}\,d_qy_\nu\nonumber\\&=
\int_{0}^t \int_{0}^{qy_\nu} \int_{0}^{qy_{\nu-1}}\ \cdots \int_{0}^{qy_3}\int_{0}^{qy_2} \frac{[\nu]_q!}{q^{\binom{\nu}{2}} t^\nu} d_qy_1\, d_qy_2 \cdots d_qy_{\nu-2}\,d_qy_{\nu-1}\,d_qy_\nu=1,
\end{align}
which confirms that Equation~\eqref{multjointorderunif} is a joint $q$-density function.
\end{proof}
 
\subsection{On a conditional joint \texorpdfstring{$q$}{q}-distribution of the waiting times of the Heine process and \texorpdfstring{$q$}{q}-order statistics }
 Let $T_k$ be the waiting time of the $k$th arrival in the Heine process $\{X(t),\,t>0 \}$ with parameters $\lambda$ and $q$. 
 Let us stop the process at $T_\nu$, for some integer $\nu\geq 1$.
 Now,  we study the joint $q$-density function of the waiting times $T_1,\ldots,T_{\nu}$.
  In the next theorem we prove that this conditional joint $q$-density function coincides with the joint $q$-density function of a $q$-ordered random sample of size $\nu$, 
 from the $q$-continuous uniform distribution in the set $[0,q^{i-1}t]$, $i=1,\ldots,\nu$.
\begin{theorem}
\label{basic1}
 Let $T_k$  be the waiting time of the $k$th arrival of the Heine process $\{X(t),\,t>0 \}$ with parameters $\lambda$ and $q$. Then the joint $q$-density function of the waiting times $T_1,\ldots,T_{\nu}$, 
in which the first $\nu$ events occur given that $X(t)=\nu$, $\,0<t_1<\cdots<t_\nu<t$ with $t_i \in (q^{\nu-i+1}t,q^{\nu-i}t ]$, $i=1,\ldots,\nu-1$, is given by
\begin{equation}
\label{orderunif}
f_q \left(t_1,\ldots,t_\nu|X(t)=\nu \right)=\frac{[\nu]_q!}{q^{\binom{\nu}{2}} t^\nu},
\end{equation}
that is the joint $q$-density function of a $q$-ordered random sample of size $\nu$, from the $q$-continuous uniform distribution in the set $[0,q^{i-1}t]$, $i=1,\ldots,\nu$.
 \end{theorem}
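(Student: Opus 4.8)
The plan is to compute the conditional joint $q$-density function of the waiting times $T_1,\ldots,T_\nu$ directly from the definition~\eqref{multjointqdens} of a multivariate $q$-density, reducing everything to probabilities of the Heine process, and then recognise the resulting expression as the joint $q$-density~\eqref{multjointorderunif} obtained in Corollary~\ref{mutlquniform2}. First I would write
\[
f_q\left(t_1,\ldots,t_\nu \mid X(t)=\nu\right)=\frac{P\left(qt_1<T_1\leq t_1,\ldots,qt_\nu<T_\nu\leq t_\nu\mid X(t)=\nu\right)}{(1-q)^\nu t_1\cdots t_\nu},
\]
and then express the numerator as $P(\text{the }\nu\text{ arrivals fall in the prescribed cells, and no further arrival occurs in }(0,t])/P(X(t)=\nu)$. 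The key observation is that, because of the geometric cell structure $t_i\in(q^{\nu-i+1}t,q^{\nu-i}t]$, the event ``$qt_i<T_i\leq t_i$ for all $i$'' is exactly the event that in the Fermat-measure subinterval of length $\delta=(1-q)t_i$ attached to $t_i$ there is precisely one arrival, while in all the \emph{other} subintervals of the dyadic-like partition $\{q^{k-1}t\}$ of $(0,t]$ there is no arrival. Using the independence of increments of the Heine process over the division points $q^{k-1}t$ and the single-arrival/no-arrival probabilities stated after Definition~\ref{HeineProc}, the numerator factors into a product of the $\nu$ ``one arrival'' probabilities $\lambda(1-q)t_i/(1+\lambda(1-q)t_i)$ times a product over the remaining cells of ``no arrival'' probabilities $1/(1+\lambda(1-q)q^{k-1}t)$.

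Next I would divide by $P(X(t)=\nu)=e_q(-\lambda t)\,q^{\binom{\nu}{2}}(\lambda t)^\nu/[\nu]_q!$ from~\eqref{poiss_proc}. Here the main computational point is that $e_q(-\lambda t)$ is, up to the finitely many factors corresponding to the cells $q^{k-1}t$ for $k=1,\ldots,N$ with $N$ large, an infinite product of exactly the ``no arrival'' factors $(1+\lambda(1-q)q^{k-1}t)^{-1}$; so in the ratio, all the ``no arrival'' contributions cancel against the corresponding factors inside $e_q(-\lambda t)$, and similarly the ``one arrival'' probabilities $\lambda(1-q)t_i/(1+\lambda(1-q)t_i)$ lose their denominators against the same product. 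What survives is
\[
\frac{P(\cdots\mid X(t)=\nu)}{1}=\frac{[\nu]_q!}{q^{\binom{\nu}{2}}(\lambda t)^\nu}\cdot\lambda^\nu(1-q)^\nu t_1\cdots t_\nu=\frac{[\nu]_q!\,(1-q)^\nu t_1\cdots t_\nu}{q^{\binom{\nu}{2}}\,t^\nu},
\]
and dividing by $(1-q)^\nu t_1\cdots t_\nu$ yields exactly $[\nu]_q!/(q^{\binom{\nu}{2}}t^\nu)$, which is~\eqref{orderunif}. Finally, I would note that the range $0<t_1<\cdots<t_\nu<t$ with $t_i\in(q^{\nu-i+1}t,q^{\nu-i}t]$ translates into the chain of inequalities $0<y_1<qy_2<y_2<\cdots<qy_\nu<y_\nu<t$ appearing in Corollary~\ref{mutlquniform2}, so the density coincides with the joint $q$-density of the $q$-ordered sample of size $\nu$ from the $q$-uniform distribution on $[0,q^{i-1}t]$, completing the proof.

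The step I expect to be the main obstacle is the bookkeeping in the cancellation: one must carefully match, cell by cell, the Heine process factors in the numerator against the factors in the infinite product defining $e_q(-\lambda t)$ in the denominator, being precise about which cells host the $\nu$ arrivals (so carry a ``one arrival'' weight) versus the infinitely many empty cells, and checking that the $\lambda$- and $(1-q)$-powers bookkeep correctly. A clean way to organise this is to work first with a truncated Heine process on $(q^{N-1}t,t]$ with $N$ large, establish the identity there exactly, and then let $N\to\infty$; the geometric decay of $\lambda(1-q)q^{k-1}t$ makes the limit of the empty-cell product absolutely convergent and equal to the relevant tail of $e_q(-\lambda t)$, so no analytic subtlety remains beyond this routine truncation argument.
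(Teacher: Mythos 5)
Your proposal is correct and follows essentially the same route as the paper: both compute the conditional probability that each $T_i$ falls in its Fermat cell $(q^{\nu-i+1}t,q^{\nu-i}t]$, factor it via the Heine process increments into one-arrival factors $\lambda(1-q)t_i/(1+\lambda(1-q)t_i)$ and no-arrival factors, divide by $P(X(t)=\nu)$ from~\eqref{poiss_proc}, and cancel the denominators against the infinite product defining $e_q(-\lambda t)$ (the paper packages the empty initial segment as $P(X(q^\nu t)=0)=e_q(-\lambda q^\nu t)$ rather than running your explicit truncation argument, but this is the same cancellation). Your final identification with Corollary~\ref{mutlquniform2} also matches the paper's conclusion.
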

 \begin{proof}
 By using the expression~\eqref{multjointqdens} and the three basic assumptions of Definition~\ref{HeineProc},
 the conditional joint $q$-density function of the Heine process  satisfies the equation
 \begin{align}
&f_q \left(t_1,\ldots,t_\nu|X(t)=\nu \right)q^{\nu-1}(1-q)t q^{\nu-2}(1-q)t\cdots q(1-q)t\\
&= P\left(q^\nu t<T_1\leq q^{\nu-1} t,\ldots, q^2 t<T_{\nu}\leq qt \ \mid  \ X(t)=\nu \right) \\
&=P\left( X(q^\nu t)=0 \right)    \left(\prod_{i=1}^{\nu-1} P\left(X(q^{i}(1-q)t)=1\right)\right) \frac{P\left(X((1-q)t)=0\right)}{P\left( X(t)=\nu\right )}\\
&=e_q(-\lambda q^\nu t)\frac{\lambda q^{\nu-1}(1-q)t}{1+\lambda q^{\nu-1}(1-q)t}
\frac{\lambda q^{\nu-2}(1-q)t}{1+\lambda q^{\nu-2}(1-q)t}\cdots \frac{\lambda q(1-q)t}{1+\lambda q(1-q)t}\frac{1}{1+\lambda (1-q)t} \\
&\,\,\,\,\,\cdot \left(e_q(-\lambda t)\frac{ q^{\binom{\nu}{2}}(\lambda t)^\nu
}{[\nu]_q!}\right)^{-1}. 
\end{align}
So,
 \begin{equation}
 \label{orderunif2}
f_q \left(t_1,\ldots,t_\nu|X(t)=\nu \right)=\frac{[\nu]_q!}{q^{\binom{\nu}{2}} t^\nu}.
\end{equation}
Therefore, by Corollary~\ref{mutlquniform2}, this conditional joint $q$-density function 
coincides with $q$-ordered density from the claim of the theorem.
 \end{proof}
\bigskip

\section{Concluding remarks}   
In this  work we have introduced  $q$-order statistics, for $0<q<1$, arising from dependent and not identically $q$-continuous random variables, 
as $q$-analogues of the classical order statistics. 
We have  studied  their main properties concerning  the $q$-distribution functions and $q$-density functions of the relative $q$-ordered random variables. 
We have concentrated  on the $q$-ordered variables arising from dependent and not identically $q$-uniformly distributed random variables. 
The derived  $q$-distributions include $q$-power law, $q$-beta and $q$-Dirichlet distributions. 
The motivation for introducing $q$-order statistics was given by studying the properties of the waiting times of the Heine process.

 As further study we propose the introduction of $q$-order statistics arising from dependent and not identically discrete $q$-distributed random variables. 
Last but not least, in link with lattice paths combinatorics, we intend to  study the relations between $q$-order statistics  and $q$-random walks in ${\mathbb Z}^d$,
building on~\cite{VamKam}.
\bigskip

\section{Acknowledgment} The author would like to thank the anonymous referees and the editors for their helpful and insightful comments and suggestions.
\newpage

\bibliography{Vamvakari}
\bibliographystyle{SLC}
\end{document}